\documentclass[11pt]{amsart}

\usepackage{amsmath, amssymb, amsthm, mathrsfs}
\usepackage{hyperref}
\usepackage{geometry}
\usepackage{mathtools}
\usepackage{xcolor}

\newtheorem{theorem}{Theorem}[section]
\newtheorem{proposition}[theorem]{Proposition}
\newtheorem{lemma}[theorem]{Lemma}
\newtheorem{conj}[theorem]{Conjecture}
\newtheorem{corollary}[theorem]{Corollary}
\theoremstyle{definition}
\newtheorem{definition}[theorem]{Definition}
\theoremstyle{remark}
\newtheorem{remark}[theorem]{Remark}
 \newtheorem*{ack*}{Acknowledgements}
 \newtheorem*{fund*}{Funding}
 \newtheorem*{coi*}{Conflict of interest}
\newtheorem*{ai*}{AI-use Disclosure}

\DeclareMathOperator{\sing}{sing}

\DeclareMathOperator{\Int}{Int}
\DeclareMathOperator{\Lip}{Lip}
\DeclareMathOperator{\osc}{osc}

\DeclareMathOperator{\Sym}{Sym}
\DeclareMathOperator{\tr}{tr}

\DeclareMathOperator{\Span}{span}

\newcommand{\R}{\mathbb{R}}

\newcommand{\dist}{\mathrm{dist}}

\usepackage[
  backend=bibtex,
  sorting=nyt,
  maxnames=50,
  doi=false,isbn=false,url=false,eprint=false,
  giveninits=true
]{biblatex}
\title[Regularity for viscosity solutions to the MSS]{Regularity theorems for Lipschitz viscosity solutions to the minimal surface system}
\author{Bryan Dimler}
\date{\today}
\address{Department of Mathematics \\ University of Notre Dame \\ Notre Dame, IN 46556 \\ USA}
\email{bdimler@nd.edu}

\begin{document}
\begin{abstract}
We prove several regularity theorems concerning Lipschitz viscosity solutions to the minimal surface system. The main lemma (i.e. ``Decoupling Lemma") says that the component functions of such solutions satisfy the homogeneous Pucci inequalities. This allows us to apply the regularity theory for scalar viscosity solutions for elliptic PDE component-wise to a viscosity solution for the system. 

As a result of the Decoupling Lemma, we deduce that 2D Lipschitz viscosity solutions to the system are analytic in the interior, and obtain a quantitative partial regularity theorem. We also prove a geometric Harnack inequality for Lipschitz solutions. The Harnack inequality yields a strong maximum principle and half-space Liouville theorem. Another significant consequence of the lemma is a boundary regularity theorem for Lipschitz viscosity solutions. Notably, our theorem does not require any density assumption at the graph boundary, and we do not assume stationarity. This is used to prove a classification theorem for the singular set of a 4D stationary solution on a bounded domain. We also prove an $\varepsilon$-regularity theorem extending a result of the author. 

To conclude, we survey open problems. As an example, we have included a Bernstein-type theorem for 3D homogeneous entire Lipschitz viscosity solutions. Taken together, our results suggest that viscosity methods are a useful tool for studying the regularity of minimal graphs.
\end{abstract}

\maketitle
`
\section{Introduction}
Suppose $\Omega \subset \mathbb{R}^n$ ($n \geq 2$) is a connected domain with Lipschitz boundary. Let $u \in \Lip(\Omega; \mathbb{R}^m) \cap C^0(\overline{\Omega};\mathbb{R}^m)$ for some $m \geq 2$. We will denote the graph of $u$ in $\mathbb{R}^{n+m}$ by 
\[\mathscr{G}_u := \{(x,u(x)) \in \mathbb{R}^{n} \times \mathbb{R}^m : x \in \overline{\Omega} \subset \mathbb{R}^n\} \subset \mathbb{R}^{n+m}.\]
The submanifold $\mathscr{G}_u$ is an $n$-dimensional embedded Lipschitz submanifold in $\mathbb{R}^{n+m}$, whose $n$-dimensional surface area is given by the integral
\[\mathcal{A}(\mathscr{G}_u) := \int_{\mathscr{G}_u} \, d\mathcal{H}^n = \int_\Omega \sqrt{\det\big(I + Du^T(x) Du(x)\big)} \, dx.\]
Here, $\mathcal{H}^n$ is the $n$-dimensional Hausdorff measure on $\mathbb{R}^{n+m}$ restricted to $\mathscr{G}_u$. 

If $\mathscr{G}_u$ is \emph{stationary} (i.e. a critical point of $\mathcal{A}$) and $\psi_u := (x,u(x))$ is the \emph{graph map}, then $\psi_u$ is an integral weak solution to the \emph{minimal surface system} (MSS):
\begin{equation}\label{MSS1}
    \Delta_g \psi_u(x) = 0 \text{ in } \Omega,
\end{equation}
where $\Delta_g$ is the Laplace-Beltrami operator on $\mathscr{G}_u$, defined component-wise, and $g := I + Du^T Du$ is the canonical metric for $\mathscr{G}_u$. In this case, we call $u$ a \emph{stationary solution}. When $u \in C^2(\Omega; \mathbb{R}^m) \cap C^0(\overline{\Omega}; \mathbb{R}^m)$, the equation \eqref{MSS1} reduces to 
\begin{equation}
    g^{ij}(x)u_{x_i x_j}^\beta(x) = 0 \text{ in } \Omega \text{ for each } \beta = 1,\ldots,m. \label{MSS2}
\end{equation}
The functions $g^{ij}$ represent the components of $g^{-1}$ and the $u^\beta$ are the component functions for $u$. Any $u$ solving \eqref{MSS2} is called a \emph{classical solution}. As \eqref{MSS1} encodes the vanishing of the mean curvature vector for $\mathscr{G}_u$, graphs of classical solutions are minimal submanifolds. When $m=1$, \eqref{MSS1} is simply the scalar \emph{minimal surface equation} (MSE). 

In this paper, we study the regularity of Lipschitz viscosity solutions to the MSS as defined by Savin \cite{Sa18}. Let us motivate the definition, which is based on the \emph{barrier principle} for minimal submanifolds.\footnote{The barrier principle says that an $n$-dimensional minimal submanifold cannot be touched from one side at an interior point by a hypersurface that is strictly mean $n$-convex there (e.g. \cite[Theorem 1.2]{Wh16}).} If $u \in C^2(\Omega; \mathbb{R}^m) \cap C^0(\overline{\Omega}; \mathbb{R}^m)$ is a classical solution, $H \in C^2(\mathbb{R}^{n+m})$, and $\mathscr{G}_u$ touches $\{H = c\}$ from the side $\{H \leq c\}$ at $p_0 := (x_0, u(x_0)) \in \Int \mathscr{G}_u$, then $H\lvert_{\mathscr{G}_u} = H \circ \psi_u$ achieves its maximum at $x_0 \in \Omega$. Thus, we have
\[\Delta_g H(p_0) \leq 0 \text{ with } \nabla H(p_0) \perp T_{p_0} \mathscr{G}_u.\]
Since $\mathscr{G}_u$ is minimal, $\Delta_g H$ coincides with the Laplacian of $H$ when $H$ is restricted to $T_{p_0} \mathscr{G}_u$. 

Due to the preceding discussion, it makes sense to define a \emph{viscosity solution for the MSS} to be any $u \in C^0(\overline{\Omega};\mathbb{R}^m)$ satisfying the following property: if $H \in C^2(\mathbb{R}^{n+m})$ and $\mathscr{G}_u$ touches $\{H = c\}$ from the side $\{H \leq c\}$ at $p_0 \in \Int \mathscr{G}_u$, then there is an $n$-plane $T \subset \mathbb{R}^{n+m}$ such that $T \perp \nabla H(p_0)$ and
\[\Delta_TH(p_0) \leq 0,\]
where $\Delta_T H$ denotes the Laplacian of $H \lvert_T$. That is, $u$ is a viscosity solution to the MSS if $\mathscr{G}_u$ cannot be touched by a $C^2$ hypersurface from one side which is strictly mean $n$-convex. We call such a touching surface a \emph{comparison surface}, and its defining function $H$ a \emph{comparison function}. The definitions are consistent with the barrier principle. When $u$ is a viscosity solution, we say it solves the MSS in the \emph{viscosity sense}. For the sake of simplicity, we will only be concerned with Lipschitz viscosity solutions to the MSS.

The main contribution of this paper is a decoupling principle for Lipschitz viscosity solutions to the MSS (i.e. the ``Decoupling Lemma"). Although the MSS is a genuinely coupled nonlinear system, we show that each component of a Lipschitz viscosity solution satisfies a scalar uniformly elliptic equation, with ellipticity constants depending only on the Lipschitz constant of the solution. This allows us to apply the regularity theory for scalar elliptic PDE component-wise. Precisely:
\begin{lemma}[Decoupling Lemma]\label{lem: decoupling}
    Suppose $u := (u^1,\ldots, u^m) \in \Lip(\Omega;\mathbb{R}^m) \cap C^0(\overline \Omega; \mathbb{R}^m)$ solves the MSS in $\Omega$ in the viscosity sense. Then each component function $u^\beta$ of $u$ satisfies the homogeneous Pucci inequalities (see \eqref{eq: homoPucci}) in $\Omega$ with ellipticity constants depending only on $\Lip u$.
\end{lemma}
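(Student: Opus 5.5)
We need to show: if a $C^2$ function $\varphi$ touches $u^\beta$ from above at $x_0$ (the subsolution direction; the other case is symmetric), then $\mathcal{M}^-_{\lambda,\Lambda}(D^2\varphi(x_0)) \le 0$. Here $\lambda,\Lambda$ will depend only on $L := \Lip u$.

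The idea is to convert the scalar touching into a comparison surface for the graph. Suppose $\varphi$ touches $u^\beta$ from above at $x_0$. Consider
\[
H(x,y) := y^\beta - \varphi(x).
\]
Since $u^\beta \le \varphi$ near $x_0$ with equality at $x_0$, we have $H\circ\psi_u \le 0 = H(p_0)$ near $p_0=(x_0,u(x_0))$. So $\mathscr{G}_u$ touches $\{H=0\}$ from the side $\{H\le 0\}$. The comparison need only be local; we may first replace $\varphi$ by $\varphi+\delta|x-x_0|^2$ so the touching is strict, and localize via a cutoff if needed. The viscosity definition then produces an $n$-plane $T\perp\nabla H(p_0)$ with $\Delta_T H(p_0)\le 0$.

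Since $D^2H = -D^2\varphi(x_0)\oplus 0$, this reads
\[
\tr\big(P_T\,(D^2\varphi(x_0)\oplus 0)\big)\ge 0,
\]
where $P_T$ is orthogonal projection onto $T$.

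The key structural step is to show that $T$ is the graph of a linear map $A:\R^n\to\R^m$ with $|A|\le C(L)$. Then $P_T$ restricted to the $x$-variables is comparable to the identity: the $x$-block of $P_T$ is $(I+A^TA)^{-1} =: g_T^{-1}$, so
\[
\tr\big(P_T(D^2\varphi\oplus0)\big)=\tr\big((I+A^TA)^{-1}D^2\varphi(x_0)\big).
\]
The matrix $(I+A^TA)^{-1}$ has eigenvalues in $[(1+C^2)^{-1},1]$, so the quantity above is bounded by $\mathcal{M}^+_{\lambda,\Lambda}(D^2\varphi(x_0))$ with $\lambda=(1+C^2)^{-1}$ and $\Lambda=1$. (The sign convention is matched to \eqref{eq: homoPucci}: $u^\beta$ is a viscosity subsolution of $\mathcal{M}^+\ge0$, and the symmetric argument with $-H$ gives supersolution of $\mathcal{M}^-\le 0$.)

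The main obstacle is controlling $T$. The orthogonality condition $T\perp\nabla H(p_0)=(-D\varphi(x_0),e_\beta)$ does not by itself prevent $T$ from being nearly vertical. Here the Lipschitz bound must enter. Because $\varphi$ touches from above a function with $\Lip u^\beta\le L$, we get $|D\varphi(x_0)|\le L$, but that alone is not enough. My plan is to exploit the freedom in the choice of $H$. We modify the comparison function to
\[
H_K(x,y)=y^\beta-\varphi(x)+K\big(|y-u(x_0)|^2 - M^2|x-x_0|^2\big)\cdot(\text{small}),
\]
or more cleanly add a term $\varepsilon\big(|y-u(x_0)-\ell(x-x_0)|^2\big)$, which vanishes on the graph to first order only when the graph lies in a Lipschitz cone. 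Since $|u(x)-u(x_0)|\le L|x-x_0|$, adding
\[
\mu\big(|y-u(x_0)|^2-L^2|x-x_0|^2\big)
\]
keeps $H_\mu\circ\psi_u\le 0$ and does not change $\nabla H(p_0)$. It adds $2\mu(-L^2 I_n\oplus I_m)$ to the Hessian. For a plane $T$ whose vertical part is large, the trace of $P_T(-L^2I\oplus I)$ is large and positive. Hence the inequality $\Delta_T H_\mu\le 0$, for $\mu$ large relative to $|D^2\varphi(x_0)|$, forces $T$ to be a graph with slope bounded by roughly $L$; quantitatively, $\tr(P_T(-L^2I\oplus I))\le 0$ in the limit.

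To keep constants independent of $\varphi$, we argue by contradiction and limit. Apply the definition with a sequence $\mu_k\to\infty$ and normalize $\Delta_T H_{\mu_k}\le0$ by $\mu_k$. A subsequence of planes $T_k\to T_\infty$ then satisfies $\tr(P_{T_\infty}(-L^2I\oplus I))\le 0$, which gives slope at most about $L$ (after a small enlargement $L\to 2L$ for strictness).

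The worry is that for finite $\mu$ the plane may depend on $\mu$. This is resolved by using a fixed, small $\mu$ chosen against the conclusion: if $\mathcal{M}^+(D^2\varphi)<0$ strictly, perturbing with $\mu$ small preserves strict negativity for all admissible graphical $T$. The large-$\mu$ analysis excludes non-graphical $T$ only up to an error of order $|D^2\varphi|/\mu$, and this can be balanced since the Pucci operator is homogeneous. Together these yield the contradiction, completing the proof with $\lambda,\Lambda$ depending only on $L$.
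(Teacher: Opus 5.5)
\textbf{There is a genuine gap, at exactly the step you flag as the main obstacle.} Nothing in your argument forces the plane $T$ given by the viscosity definition at $p_0$ to be a graph with slope $\le C(L)$.

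The cone term only contributes a \emph{trace}:
$\tr\bigl(P_T(-L^2I_n\oplus I_m)\bigr)=(1+L^2)a-nL^2$, where $a:=\tr(P_T|_{\R^m})$. For a graphical $T$ one has $a=\sum_i\sigma_i^2/(1+\sigma_i^2)$. So this constraint bounds only an average of the slopes. Once $(n-1)L^2\ge1$, it even allows $T$ to contain a vertical vector.

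Here is a concrete case. Take $n=m=2$, $\beta=1$, $L\ge1$, $x_0=0$, $u(0)=0$, $D\varphi(0)=0$, and $D^2\varphi(0)=\mathrm{diag}(\epsilon,-1)$ with $0<\epsilon<\lambda$, so that $\mathcal{M}^+_{\lambda,1}(D^2\varphi(0))=\epsilon-\lambda<0$. Write $e_{x_i},e_{y_j}$ for the coordinate vectors of $\R^2\times\R^2$. Then:
\begin{itemize}
\item $\nabla H_\mu(p_0)=e_{y_1}$;
\item $D^2H_\mu(p_0)=\mathrm{diag}(-\epsilon-2\mu L^2,\,1-2\mu L^2,\,2\mu,\,2\mu)$;
\item $T=\Span\{e_{x_1},e_{y_2}\}$ is orthogonal to $\nabla H_\mu(p_0)$, and $\tr_TD^2H_\mu(p_0)=-\epsilon+2\mu(1-L^2)<0$ for every $\mu\ge0$.
\end{itemize}
So the viscosity inequality is satisfied by a non-graphical plane for all $\mu$, small or large. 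Neither the limit $\mu_k\to\infty$ (here $\tr(P_T(-L^2I\oplus I))=1-L^2\le0$ with $T$ vertical in one direction) nor the ``balancing via homogeneity'' in your last paragraph gives a contradiction.

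A cleverer comparison function does not help either. The set $S=\{y^1\le\varphi(x),\ |y|\le L|x|\}$ contains, near $p_0$, the wedge $W=\{se_{x_1}+te_{y_2}:|t|\le L|s|\}\subset T$. Let $H$ be any function with $H\le0$ on $S$ and $H(p_0)=0$.
\begin{itemize}
\item Since $H\le0$ on $W$, we get $\nabla H(p_0)\perp T$.
\item Write $D^2H(p_0)|_T=h_{11}s^2+2h_{12}st+h_{22}t^2$. Testing at $(1,0)$ and $(1,\pm L)$ gives $h_{11}\le0$ and $h_{11}+h_{22}L^2\le0$.
\item Hence $\tr_TD^2H(p_0)=h_{11}+h_{22}\le h_{11}(1-L^{-2})\le0$.
\end{itemize}
So no argument that uses only the viscosity inequality at the single, possibly singular, contact point $p_0$, together with $u^\beta\le\varphi$ and the Lipschitz cone at $x_0$, can succeed.

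\textbf{The missing idea is to move the contact point to a regular point.} Only there does the Lipschitz bound actually control the plane. The paper does this as follows.
\begin{itemize}
\item It makes the touching strict with $\varphi+|x-x_0|^4$.
\item It tilts $H$ to $H_a(p)=H(p)+a\cdot p$ with $|a|$ small, so that the maximum over $\mathscr{G}_u$ stays in $\psi_u(B_r^n(x_0))$.
\item $\mathscr{G}_u$ is an $(n,0)$-set and $Z=\psi_u(\sing(u))$ is $\mathcal{H}^n$-null. Santilli's Lusin-type theorem then gives $\mathcal{H}^{n+m-1}\bigl(N(\mathscr{G}_u)|_Z\bigr)=0$.
\item Every tilt producing contact in $Z$ has the form $a=t\nu-\nabla H(p)$ with $(p,\nu)\in N(\mathscr{G}_u)|_Z$. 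This map is Lipschitz, so the bad tilts form a Lebesgue-null set.
\end{itemize}
For a generic $a$, the contact point $x_a$ is regular. The relevant plane is then the true tangent plane $T_{p_a}\mathscr{G}_u$, with $\lambda_MI\le g^{-1}(x_a)\le I$. Your graphical computation, which is the paper's Step 3, together with continuity of $D^2\varphi$, now gives the contradiction. That computation is only available after this relocation step.

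A minor point: your opening sentence has the wrong Pucci inequality. Touching from above should yield $\mathcal{M}^+_{\lambda,\Lambda}(D^2\varphi(x_0))\ge0$, as you state later.
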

The proof of the Decoupling Lemma uses White’s theory of \((n,h)\)-sets \cite{Wh16} and, in particular, a Lusin-type theorem for the generalized normal bundle of such sets due to Santilli (see Theorem \ref{thm: santilli}). We first observe that every viscosity graph is an \((n,0)\)-set. In fact, the definitions are equivalent (Proposition \ref{prop: euivalent}). Santilli’s Lusin theorem then allows us to tilt a touching hypersurface so that, for almost every small affine tilt, contact occurs at a regular point of the solution's graph. Hence, their theorem may be viewed as a geometric analogue of the measure-theoretic mechanism underlying ABP arguments, where the contact-point-to-vertex map is Lipschitz and therefore sends null sets to null sets. In this sense, the proof of Lemma \ref{lem: decoupling} can be seen as a geometric ABP argument. We refer the reader to \cite[Chapter 3]{CC95} for more on this perspective.

Combined with the scalar regularity theory and the observations of \cite[Section 4]{BD2}, Lemma \ref{lem: decoupling} yields several new regularity and rigidity results for Lipschitz viscosity solutions. We briefly state the most prominent ones. The first, which is immediate from a theorem of Jensen for 2D scalar equations \cite{Je96}, is full interior regularity for 2D solutions for the system.
\begin{theorem}[2D Regularity] \label{thm: 2d}
  Suppose $\Omega \subset \mathbb{R}^2$. If $u \in \Lip(\Omega; \mathbb{R}^m)\cap C^0(\overline{\Omega};\mathbb{R}^m)$ solves the MSS in $\Omega$ in the viscosity sense, then $u$ is analytic in $\Omega$.
\end{theorem}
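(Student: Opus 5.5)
The plan is to combine the Decoupling Lemma (Lemma \ref{lem: decoupling}) with Jensen's two-dimensional regularity theorem for viscosity solutions of the Pucci inequalities \cite{Je96}, and then bootstrap. By Lemma \ref{lem: decoupling}, each component $u^\beta$ is a viscosity solution in $\Omega\subset\mathbb{R}^2$ of
\[
\mathcal{M}^-_{\lambda,\Lambda}(D^2 u^\beta)\le 0\le \mathcal{M}^+_{\lambda,\Lambda}(D^2 u^\beta),
\]
with $\lambda,\Lambda$ depending only on $\Lip u$. In two dimensions, Jensen's result shows that functions in this class are $C^{1,\alpha}_{loc}$ for some $\alpha=\alpha(\lambda,\Lambda)>0$. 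The 2D mechanism is that a function in the class has difference quotients whose level sets are controlled (equivalently, first derivatives satisfy the same inequalities in a suitable sense, as in the Nirenberg/Bers theory of 2D linear equations with measurable coefficients). So I would first conclude $u\in C^{1,\alpha}_{loc}(\Omega;\mathbb{R}^m)$.

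Next I upgrade from $C^{1,\alpha}$ to a classical solution. Once $u\in C^{1,\alpha}$, the coefficients $g^{ij}=(I+Du^TDu)^{-1}_{ij}$ are $C^\alpha$ and uniformly elliptic. I would show that each $u^\beta$ is a viscosity solution of the \emph{linear} equation $g^{ij}(x)\,w_{ij}=0$, with the frozen coefficients $g^{ij}$. The argument is as follows. If a $C^2$ function $\varphi$ touches $u^\beta$ from above at $x_0$, then $D\varphi(x_0)=Du^\beta(x_0)$, since $u$ is differentiable there. Use the comparison function $H(x,y)=y^\beta-\varphi(x)$, suitably perturbed in the other $y$-directions to make the contact one-sided, e.g. by adding $-\varepsilon|y-u(x_0)|^2$ terms. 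Because $u$ is $C^1$, the graph has a unique tangent plane at $p_0$, namely $T_{p_0}\mathscr{G}_u$. I then need that the plane $T$ in the definition must be this tangent plane. This follows because $T\perp\nabla H(p_0)$ alone does not fix $T$, but perturbing $H$ by tilting forces $T$ to be the tangent plane. Alternatively, I would invoke the $(n,0)$-set equivalence (Proposition \ref{prop: euivalent}), which at $C^1$ points gives the inequality on the classical tangent plane. Computing $\Delta_{T_{p_0}\mathscr{G}_u}H$ yields $g^{ij}(x_0)\varphi_{ij}(x_0)\ge 0$ up to $\varepsilon$.

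Linear uniformly elliptic equations with $C^\alpha$ coefficients have $C^{2,\alpha}$ viscosity solutions: by Caffarelli's Schauder theory for viscosity solutions, plus uniqueness/comparison with classical solutions of the Dirichlet problem. Hence $u\in C^{2,\alpha}_{loc}$, and $u$ solves \eqref{MSS2} classically. Then the standard Schauder bootstrap gives $u\in C^\infty$. Analyticity follows from Morrey's theorem on analyticity of smooth solutions of analytic nonlinear elliptic systems, since the MSS is an analytic elliptic system (Legendre–Hadamard) in $u$ once $Du$ is continuous.

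I expect the main obstacle to be the second step. It requires justifying that at a point of differentiability, the plane $T$ furnished by the viscosity definition may be taken to be the tangent plane of the graph. Equivalently, one must show that the coupled viscosity condition yields the linear equation with frozen coefficients $g^{ij}(x)$. I would handle this by adding to $H$ a small strictly convex term in the normal directions, $+\delta\,|\pi_{N}(p-p_0)|^2$, which forces any admissible $T$ to be close to $T_{p_0}\mathscr{G}_u$, and then letting $\delta\to0$. If this is delicate, I would fall back on Jensen's theorem giving $C^{1,\alpha}$ directly for each component, followed by difference-quotient arguments for the divergence-form weak system, using stationarity of $C^1$ viscosity graphs.
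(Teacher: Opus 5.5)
Your first step (the Decoupling Lemma plus Jensen's two-dimensional theorem, giving $u\in C^{1,\alpha}_{loc}$) is exactly the paper's. The paper then finishes in one line using Savin's Allard theorem (Theorem~\ref{Allard}). At each $x_0\in\Omega$, differentiability makes $r^{-1}(u(x_0+rz)-u(x_0))$ uniformly close on $B_1^n(0)$ to the linear map $z\mapsto Du(x_0)z$. Since $|Du(x_0)|\le M$, the constant $\varepsilon_\ast$ is uniform, so $u$ is $C^{2,\alpha}$ near $x_0$, and Remark~\ref{rmk:analytic} gives analyticity. You instead linearize to $g^{ij}(x)w_{ij}=0$ and invoke Schauder theory. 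That route is viable, but the step you identify as the main obstacle is argued incorrectly.

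Adding $\delta|\pi_N(p-p_0)|^2$ changes $D^2H(p_0)$ by $2\delta\pi_N$. The definition then only yields some $T\perp\nabla H(p_0)$ with $\tr_T D^2H(p_0)+2\delta\,\tr(\pi_T\pi_N)\le 0$, where $\tr(\pi_T\pi_N)\ge0$ measures how far $T$ is from $T_{p_0}\mathscr{G}_u$. For small $\delta$ this does not localize $T$. Letting $\delta\to0$ just returns the unpenalized condition, which is much weaker than $g^{ij}(x_0)\varphi_{ij}(x_0)\ge0$.

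Here is a concrete instance. Take $H=y^\beta-\varphi(x)$, $m\ge2$, and suppose $D^2\varphi(x_0)v=\kappa v$ with $\kappa>0$, $|v|=1$. Then the plane $\Span\{(v,(D\varphi(x_0)\cdot v)e_\beta),(0,e_\gamma)\}$ with $\gamma\neq\beta$ is orthogonal to $\nabla H(p_0)$ and has negative $D^2H(p_0)$-trace. This persists after adding $2\delta\pi_N$ for all small $\delta$, whatever the sign of $g^{ij}(x_0)\varphi_{ij}(x_0)$. Tilting does not help, since it leaves $D^2H$ unchanged. Nor does Proposition~\ref{prop: euivalent}, since the $(n,0)$ condition only controls $\min_T\tr_T D^2H(p_0)$.

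What works is $\delta\to\infty$, the same mechanism as the $e^{tH}$ trick in the proof of Proposition~\ref{prop: euivalent}:
\begin{itemize}
\item First make the touching strict, $\varphi-u^\beta\ge\eta|x-x_0|^2$.
\item Differentiability at $x_0$ gives $|\pi_N(\psi_u(x)-p_0)|=o(|x-x_0|)$. So the penalty is absorbed by $\eta|x-x_0|^2$ on a $\delta$-dependent neighborhood, and contact stays one-sided for every $\delta$.
\item Then $\tr(\pi_{T_\delta}\pi_N)\le C/\delta$, so $T_\delta\to T_{p_0}\mathscr{G}_u$ along a subsequence.
\item The limit gives $g^{ij}(x_0)\varphi_{ij}(x_0)\ge-2\eta\tr g^{-1}(x_0)$; now let $\eta\to0$.
\end{itemize}

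With this repair the rest of your argument goes through: compare $u^\beta$ on small balls with the classical $C^{2,\alpha}$ solution of the Dirichlet problem for $g^{ij}(x)w_{ij}=0$, then bootstrap and apply Morrey. Your route avoids Savin's improvement-of-flatness theorem. The price is the tangent-plane identification above, and needing H\"older continuity of $Du$ rather than the mere pointwise differentiability the paper's argument uses. Your fallback is not an independent tool. Stationarity of $C^1$ viscosity graphs is not available a priori; it would itself follow from the regularity being proved.
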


By modifying Armstrong, Silvestre, and Smart's partial regularity argument for fully nonlinear scalar equations \cite[Theorem 1]{ASS12}, we obtain a quantitative partial regularity theorem for the system. This sharpens the qualitative version proved in \cite[Theorem 4.11]{BD2}. 
\begin{theorem}[Quantitative Partial Regularity] \label{thm: partialreg}
    Assume $\Omega \subset \mathbb{R}^n$ for $n \geq 3$. Suppose $u \in \Lip(\Omega;\mathbb{R}^m) \cap C^0(\overline{\Omega}; \mathbb{R}^m)$ solves the MSS in $\Omega$ in the viscosity sense. Then $\sing(u)$ has Hausdorff dimension at most $n-\varepsilon$, where 
    \[\varepsilon
\geq  \frac{1}{4\log n}
\left(
\frac{3+5M^2}{3(1+M^2)^2}
\right)^{n-1}  \text{ for } \Lip u \leq M.
\]
\end{theorem}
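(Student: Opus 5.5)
The plan is to run the Armstrong--Silvestre--Smart argument \cite[Theorem 1]{ASS12} on the system, using Lemma \ref{lem: decoupling} as the bridge to scalar theory. The key point is that ASS12 needs two inputs: a $W^{2,\delta}$-type estimate for functions satisfying the Pucci inequalities, with $\delta$ quantified in terms of the ellipticity ratio; and an $\varepsilon$-regularity statement saying that a solution which is close to a quadratic polynomial on a ball is smooth on a smaller ball. Their argument then turns the power-decay of the measure of the "bad" set into a Hausdorff dimension bound $n-\varepsilon$, with $\varepsilon$ comparable to $\delta$.

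\textbf{Step 1 (ellipticity constants).} I would first make Lemma \ref{lem: decoupling} quantitative. With $\Lip u\le M$, the eigenvalues of $g^{-1}=(I+Du^TDu)^{-1}$ lie between $(1+M^2)^{-1}$ and $1$. So each $u^\beta$ satisfies $\mathcal{M}^-_{\lambda,\Lambda}(D^2u^\beta)\le 0\le \mathcal{M}^+_{\lambda,\Lambda}(D^2u^\beta)$ in the viscosity sense, with ratio $\lambda/\Lambda$ explicit in $M$. The factor $\frac{3+5M^2}{3(1+M^2)^2}$ should be exactly the effective ratio produced by the proof of the Decoupling Lemma, since the tilting argument costs some constants. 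I would track these constants rather than rederive them.

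\textbf{Step 2 (quantitative $W^{2,\delta}$ bound).} I would apply the ABP/covering argument of \cite[Chapter 7]{CC95}, in the sharpened form in ASS12 where $\delta \gtrsim (\lambda/\Lambda)^{n-1}/\log n$. Componentwise this gives
\[|\{x\in B_{1/2}: \Theta(u^\beta,x)>t\}|\le Ct^{-\delta}\]
for each $\beta$, where $\Theta$ is the reciprocal of the largest radius of touching paraboloids from above and below. Summing over $\beta$ costs only a constant. This yields the same decay for the set where some component fails to be touched by paraboloids of opening $t$.

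\textbf{Step 3 ($\varepsilon$-regularity).} At points where all components are touched from above and below by paraboloids of opening $\le t$ on a ball of radius $\sim t^{-1}$, the rescaled $u$ is $C^{1,1}$-close to a quadratic. I would then need an $\varepsilon$-regularity theorem for the MSS: a viscosity solution that is uniformly close to a smooth classical solution, or flat enough, is smooth. I expect this is the content of \cite[Section 4]{BD2} and the author's $\varepsilon$-regularity result. The idea is to use the smallness of the $C^{1,1}$ oscillation to linearize around the approximating quadratic. Once this is available, the ASS12 dyadic covering argument transfers verbatim: the singular set is covered at scale $r$ by $\lesssim r^{-(n-\delta')}$ balls, giving $\dim \sing(u)\le n-\varepsilon$ with $\varepsilon\ge \frac{1}{4\log n}(\lambda/\Lambda)^{n-1}$.

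\textbf{Main obstacle.} The hardest step is Step 3. ASS12 uses that the operator is a single $C^1$ uniformly elliptic $F$, so closeness to a quadratic implies $C^{2,\alpha}$ regularity via Savin's small perturbation theorem. For the system, closeness of each component separately to a quadratic must be upgraded to smoothness of the coupled graph. I would first try to invoke Savin's small-perturbation result for viscosity solutions of the MSS \cite{Sa18} directly: a Lipschitz viscosity solution that is $C^{1,1}$-flat in a ball relative to a quadratic is smooth there. I would then check that the flatness produced in Step 2 meets its hypotheses after rescaling. Getting the exact constant also requires care in Step 1.
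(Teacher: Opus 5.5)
Your architecture matches the paper's: Lemma \ref{lem: decoupling2} puts each $u^\beta$ in the Pucci class, the $W^{2,\varepsilon}$ decay is applied componentwise, and a covering argument in the style of \cite{ASS12} is closed off with Savin's flatness theorem, as in \cite[Theorem 4.12]{BD2}. The gap is the explicit bound on $\varepsilon$, which is the quantitative content of the statement. Your Step 1 is incorrect. The Decoupling Lemma loses nothing: tilting only moves the contact point to a regular point $x_a$. There the barrier principle is applied with the exact coefficient matrix $g^{-1}(x_a)$, so the constants are precisely $\lambda_M=(1+M^2)^{-1}$ and $\Lambda=1$, the eigenvalue range of $g^{-1}$ when $\Lip u\le M$. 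The displayed base cannot be a degraded ``effective ratio'' produced by that proof, since
\[
\frac{3+5M^2}{3(1+M^2)^2}=\frac{\lambda_M(5-2\lambda_M)}{3}>\lambda_M \qquad (M>0),
\]
i.e.\ it is strictly larger than the true ellipticity ratio. With the correct ratio, your Step 2 formula $\varepsilon\ge\frac{1}{4\log n}(\lambda/\Lambda)^{n-1}$ gives only $\frac{1}{4\log n}(1+M^2)^{1-n}$, which is strictly weaker than the claim. The stated bound comes from inserting $\lambda=\lambda_M$, $\Lambda=1$ into the sharper lower bound for the $W^{2,\varepsilon}$ exponent in \cite[Theorem 1]{NT23} (cf.\ \cite{Mo19}). \cite{ASS12} uses only the qualitative $W^{2,\varepsilon}$ estimate and is not the source of an explicit exponent.

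Separately, Step 3 is not the obstacle you describe, and quadratics play no role. The paper's input is Theorem \ref{Allard}, which needs only $L^\infty$-closeness of the vector-valued $u$ to an affine map. Suppose every $u^\beta$ is touched from above and below at $x$ by paraboloids of opening $t$. Then $|u(y)-u(x)-Du(x)(y-x)|\le Ct|y-x|^2$, so $v(z)=r^{-1}(u(x+rz)-u(x))$ lies within $Ctr$ of an affine map of slope at most $M$ on $B_1^n(0)$. Taking $r=c/t$ with $c$ small depending on $\varepsilon_\ast(n,m,M)$, Theorem \ref{Allard} shows that $B^n_{c/(2t)}(x)$ consists of regular points. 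Hence the $c/(2t)$-neighborhood of $\sing(u)$ lies in the set where some component fails to be touched, whose measure is at most $Ct^{-\varepsilon}$. This is a Minkowski, hence Hausdorff, dimension bound of $n-\varepsilon$. Because the MSS rescales at first order, flatness improves like $tr$. In the scalar fully nonlinear case, Savin's small perturbation theorem needs closeness to a quadratic with error small relative to $r^2$, which forces the argument one derivative higher. Here no quadratic approximation is needed. There is also no coupling issue, since the componentwise first-order approximations are simply the components of one affine map.
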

\begin{remark}
    The lower bound on $\varepsilon$ is due to \cite[Theorem 1]{NT23} (see also \cite{Mo19}). A sharp lower bound for $\varepsilon$ is of interest since, if $\varepsilon > 1$, Lipschitz viscosity solutions are necessarily stationary by removal of singularities \cite[Corollary 4.9]{BD2}. However, since the bound on $\varepsilon$ is precisely the bound in the $W^{2,\varepsilon}$ estimate for fully nonlinear scalar equations, our current proof forces $\varepsilon < 1$. Thus, a new approach is needed if one hopes to show $\varepsilon > 1$ for a general Lipschitz viscosity solution.
\end{remark}

The reader may notice that Theorem \ref{thm: partialreg} is weaker than the corresponding partial regularity theorem for stationary solutions (e.g. \cite[Theorem 3.7]{BD2}).\footnote{The $n-4$ dimension bound is originally due to Duggan \cite[Theorem 4.4]{Du86}, though their proof uses a GMT argument while \cite[Theorem 3.7]{BD2} is a PDE argument. The methods in this paper are based on those in \cite{BD2}.} This is because, in our setting, one does not have access to a monotonicity formula and must rely on the general theory for scalar equations.

A simple rotation of coordinates argument yields a Harnack inequality, which implies a geometric strong maximum principle (Corollary \ref{cor: max}) and a half-space Liouville theorem (Corollary \ref{cor: liouville}). 
\begin{proposition}[Harnack Inequality]\label{prop: harnack}
    Suppose $u\in \Lip(\Omega; \mathbb{R}^m) \cap C^0(\overline \Omega; \mathbb{R}^m)$ solves the MSS in $\Omega$ in the viscosity sense. For fixed $(a,b) \in \mathbb{R}^n \times \mathbb{R}^m$ and $c \in \mathbb{R}$, consider the affine function $L : \mathbb{R}^{n} \times \mathbb{R}^m \rightarrow \mathbb{R}$
    \begin{equation*}
        L(x,z) := a \cdot x + b \cdot z + c
    \end{equation*}
    and set $w(x) := L(x,u(x))$. If $w \geq 0$ in $B_{2r}^n \subset \Omega$, then there is a constant $C:= C(n,\Lip u)$ such that \[\sup_{B_r^n} w \leq C \inf_{B_r^n} w.\]
\end{proposition}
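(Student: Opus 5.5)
The idea is to reduce the statement to the Decoupling Lemma (Lemma \ref{lem: decoupling}) by a rotation of coordinates, and then apply the Krylov--Safonov Harnack inequality for scalar functions satisfying the homogeneous Pucci inequalities.

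\emph{Trivial case.} If $b = 0$, then $w = a\cdot x + c$ is affine and nonnegative on $B_{2r}^n$. If $w \not\equiv 0$, the function $w(x)/w(0)$ is affine, nonnegative on $B_{2r}$ and equal to $1$ at the origin. Such functions satisfy $w \le 2w(0)$ on $B_{2r}$, and hence $w \ge w(0)/2$ on $B_r$, so $\sup_{B_r} w \le 4\inf_{B_r} w$ directly.

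\emph{Rotation.} Assume $b \neq 0$ and normalize so that $|b| = 1$. Choose an orthogonal change of coordinates of $\mathbb{R}^m$ sending $b$ to $e_1$. Since the viscosity definition is invariant under rigid motions of $\mathbb{R}^{n+m}$ that preserve the splitting $\mathbb{R}^n\times\mathbb{R}^m$, the rotated map $\tilde u = Qu$ is again a Lipschitz viscosity solution, with $\Lip\tilde u = \Lip u$. Its first component is $\tilde u^1 = b\cdot u$.

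Next, absorb the term $a\cdot x$. Consider the rotation of $\mathbb{R}^{n+m}$ in the plane spanned by the unit vector $a/|a| \in \mathbb{R}^n$ and $e_1 \in \mathbb{R}^m$, tilting the graph so that the new height in the $e_1$ direction is $(a\cdot x + b\cdot z)/\sqrt{1+|a|^2}$. The trouble is that after a large tilt the image need not be a graph over $\mathbb{R}^n$. I would handle this by observing that $w$ is itself a function of $x$, so only a differential inequality for $w$ is needed, not a new graph.

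\emph{Key step: Pucci inequalities for $w$.} I would show directly that $w$ satisfies the homogeneous Pucci inequalities with constants depending on $\Lip u$. The plan is to repeat the proof of Lemma \ref{lem: decoupling}, whose touching argument only uses comparison functions of the form $H(x,z) = \varphi(x) - z^\beta$. Here one uses instead
\[
H(x,z) = \varphi(x) - (a\cdot x + b\cdot z + c).
\]
Subtracting an affine function of $(x,z)$ does not change any Hessians, so the same Santilli-tilting argument gives, at contact points, an inequality $g^{ij}\varphi_{ij} \le 0$ (respectively $\ge 0$). Here $g$ is an approximate tangent metric at a regular point, so $g^{-1}$ has eigenvalues in $[(1+\Lip u^2)^{-1}, 1]$. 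Equivalently, one can note that $\tilde w = w - a\cdot x - c = b\cdot u$ is a component of the rotated solution and hence satisfies the Pucci inequalities by Lemma \ref{lem: decoupling}. Since the Pucci inequalities are invariant under adding affine functions, $w$ satisfies them too. This second route is cleaner and is the one I would use.

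\emph{Conclusion.} Now $w \ge 0$ is a viscosity solution of $\mathcal{M}^-(D^2 w) \le 0 \le \mathcal{M}^+(D^2 w)$ in $B_{2r}$, with ellipticity constants depending only on $\Lip u$. By scaling, the Harnack inequality of \cite[Theorem 4.3]{CC95} gives $\sup_{B_r} w \le C(n,\Lip u)\inf_{B_r} w$.

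The main obstacle is confirming that the rotation in $\mathbb{R}^m$ preserves the viscosity property and the Lipschitz constant. This is immediate, because the rotation acts isometrically on the fibers and preserves both the graph structure and the mean-convexity condition on comparison surfaces.
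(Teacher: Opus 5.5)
Your proposal is correct and follows essentially the same route as the paper. Both handle $b=0$ directly, rotate $\mathbb{R}^m$ so that $(Ru)^1 = b\cdot u/|b|$, and apply Lemma \ref{lem: decoupling2}; both then pass to $w$ using positive homogeneity and the invariance of the Pucci inequalities under adding affine functions, and conclude with the scalar Harnack inequality from \cite{CC95}. The $\mathbb{R}^{n+m}$-tilting digression and your first route (rerunning the Santilli argument with $H=\varphi-L$) are unnecessary but harmless.
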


A particularly significant consequence we will prove is the following boundary regularity theorem. For $\delta > 0$, we set 
\[\Omega_\delta := \{x \in \Omega : \dist(x, \partial \Omega) < \delta\}.\] 
\begin{theorem}[Boundary Regularity]\label{thm: boundary}
    Assume $\Omega \subset \mathbb{R}^n$ is a bounded domain with $C^2$ boundary. Suppose $u \in \Lip(\Omega; \mathbb{R}^m) \cap C^0(\overline{\Omega};\mathbb{R}^m)$ solves the MSS in $\Omega$ in the viscosity sense, and that $u \lvert_{\partial \Omega} = \phi \in C^{1,\alpha}(\partial \Omega; \mathbb{R}^m)$ for some $\alpha \in (0,1)$. Then there are constants $\delta > 0$ and $\beta \in (0, \alpha)$ such that $u \in C^{1,\beta}(\overline{\Omega_{\delta}}; \mathbb{R}^m)$, where $\delta := \delta(n,m, \Lip(u), \partial \Omega, \alpha, \|\phi\|_{C^{1,\alpha}(\partial \Omega)})$ and $\beta := \beta(n,\Lip u, \alpha)$.
\end{theorem}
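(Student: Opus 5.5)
The plan is to reduce Theorem \ref{thm: boundary} to the scalar boundary regularity theory for the Pucci class, with Lemma \ref{lem: decoupling} providing the reduction. By that lemma each component $u^\beta$ lies in $S^*(\lambda,\Lambda,0)$, that is, it satisfies both homogeneous Pucci inequalities, with $\lambda,\Lambda$ depending only on $\Lip u$.

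The first step is to recall the boundary estimate for the class $S^*$ from the fully nonlinear literature (Krylov, and in viscosity form Silvestre--Sirakov or Ma--Wang). It says that if $v\in S^*(\lambda,\Lambda,0)$ in $\Omega\cap B_1(x_0)$, where $x_0\in\partial\Omega$ and $\partial\Omega$ is $C^2$, and $v=\phi$ on $\partial\Omega\cap B_1(x_0)$ with $\phi\in C^{1,\alpha}$, then $v$ is $C^{1,\beta}$ at $x_0$. Precisely, there is an affine $\ell_{x_0}$ with
\[
|v(x)-\ell_{x_0}(x)|\le C|x-x_0|^{1+\beta},
\]
where $\beta=\beta(n,\lambda,\Lambda,\alpha)<\alpha$ and $C$ depends on $\|v\|_{L^\infty}$, $\|\phi\|_{C^{1,\alpha}}$ and the $C^2$ character of $\partial\Omega$. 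I would apply this to each component $u^\beta$. The dependence on the data goes as follows. The bound $\|u\|_{L^\infty}$ is controlled by $\|\phi\|_{L^\infty}$ via the maximum principle for $S^*$. The boundary radius is uniform by compactness of $\partial\Omega$. This gives a uniform pointwise $C^{1,\beta}$ expansion at every boundary point. The exponent $\beta$ depends only on $n$, $\Lip u$ and $\alpha$, as claimed.

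The second step passes from pointwise boundary expansions to $C^{1,\beta}$ on a collar. At interior points near the boundary, the class $S^*$ alone only yields $C^\gamma$ (Krylov--Safonov), not $C^{1,\beta}$. So interior regularity near $\partial\Omega$ must come from the system itself, and for this I would use the $\varepsilon$-regularity theorem (the author's, as extended later in the paper), applied on balls $B_{d/2}(x)$ with $d=\dist(x,\partial\Omega)$. Fix $x$ with $d<\delta$ and let $x_0\in\partial\Omega$ be its nearest point. The boundary expansion gives
\[
\sup_{B_{d/2}(x)}|u-\ell_{x_0}|\le Cd^{1+\beta}.
\]
After rescaling to unit size, $u$ is $Cd^{\beta}$-close to an affine map; this is a flatness condition. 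Choosing $\delta$ so small that $C\delta^\beta<\varepsilon_0$, the $\varepsilon$-regularity theorem gives smoothness in $B_{d/4}(x)$ with a scaled estimate $[Du]_{C^{\beta}(B_{d/4}(x))}\le C$. The standard patching argument then combines this interior estimate with the boundary expansions, as in the proof of global $C^{1,\alpha}$ for Dirichlet problems in Caffarelli--Cabré, and yields $u\in C^{1,\beta}(\overline{\Omega_\delta})$. The dependence of $\delta$ on $m$ enters through the $\varepsilon$-regularity threshold.

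The main obstacle is this interior step. If a flatness-based $\varepsilon$-regularity theorem for Lipschitz viscosity solutions is not available in the needed form, I would instead use the Harnack inequality, Proposition \ref{prop: harnack}. Applied to affine functions $L(x,u)$ after rotations, it gives an improvement-of-flatness iteration for the graph near each point of $\Omega_\delta$. Some care is also required in the boundary step: the scalar theory must be cited in a form valid for viscosity functions in $S^*$ with zero right-hand side and $C^{1,\alpha}$ data on a $C^2$ domain.
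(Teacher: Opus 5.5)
Your proposal follows the paper's proof essentially step for step. You use decoupling and then Silvestre--Sirakov component-wise to get uniform pointwise $C^{1,\beta}$ expansions at boundary points. You then rescale on balls of radius comparable to $d=\dist(x,\partial\Omega)$ to obtain $Cd^{\beta}$-flatness, apply an interior flatness theorem giving $\|D^2u\|_{L^\infty(B_{d/4}(x))}\le Cd^{\beta-1}$ and $|Du(x)-A(p)|\le Cd^{\beta}$, and finish with the two-point patching.

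The one correction concerns which flatness result you invoke. You need Savin's Allard theorem (Theorem \ref{Allard}, already stated in the preliminaries, whose threshold $\varepsilon_\ast$ depends on the uniformly bounded slope $|A(p)|$). The $\varepsilon$-regularity Theorem \ref{thm: epsilonreg} does not work here, because its hypothesis $\osc \tilde u<\varepsilon$ is not what the boundary expansion provides. With that substitution, the Harnack-based fallback is unnecessary.
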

\begin{remark}
    When $\partial \Omega$ and $\phi$ are $C^{2,\alpha}$, we can conclude that $u \in C^{2,\alpha}(\overline{\Omega_\delta};\mathbb{R}^m)$ (Corollary \ref{cor: 2alpha}).
\end{remark}
In contrast with the classical boundary regularity theorem for stationary minimal graphs (e.g. \cite[Theorem 2.3]{LO1}), our argument requires neither stationarity nor a small-excess or density hypothesis at the graph boundary, and does not assume convexity of \(\Omega\).

\begin{remark}
   Since stationary solutions are viscosity solutions \cite[p. 3, para. 1]{Sa18}, all of these results apply in particular to Lipschitz stationary solutions of the MSS. What is striking is that their proofs use only the viscosity barrier formulation and not stationarity itself. Thus, these conclusions are obtained without using the variational structure of the MSS, and the arguments are independent of the codimension. Using Theorem \ref{thm: boundary}, we prove a classification theorem for $\sing(u)$ when $u$ is a 4D stationary solution (Proposition \ref{prop: sing(u)}). 
\end{remark}

The paper is organized as follows. In Section 2, we summarize the properties of viscosity solutions to the MSS that we will need, and introduce $(n,0)$-sets focusing on the work of Santilli in \cite{San20}. Section 3 is dedicated to the proof of the Decoupling Lemma (restated more carefully in Lemma \ref{lem: decoupling2}) and its consequences. In Section 4, we briefly prove an $\varepsilon$-regularity theorem (Theorem \ref{thm: epsilonreg}) which generalizes \cite[Proposition 4.20]{BD2}. Intuitively, the theorem says that graphs of Lipschitz viscosity solutions lying close in a suitable sense to an $n$-dimensional codimension one graph embedded in $\mathbb{R}^{n+m}$ are regular. Section 5 surveys open problems for Lipschitz viscosity solutions and, as an example, we prove a Bernstein-type theorem for 3D entire solutions.

\begin{ack*}
    The author thanks Aranya Sen for their interest and feedback on earlier drafts of this manuscript --- especially with regard to the fully nonlinear PDE theory. The author also thanks Connor Mooney for pointing out the similarities between the proof of the Decoupling Lemma, including Santilli's work, and the classical ABP estimate.
\end{ack*}
\begin{ai*}
    The author used ChatGPT Edu (GPT--6.1 Sol), provided by the University of Notre Dame, to explore proof strategies; particularly for Lemma \ref{lem: decoupling} and to understand the relationship between viscosity graphs and $(n,0)$-sets. ChatGPT suggested the framework of White and Santilli after being given access to the author's work in \cite{BD2} and prompted with a general strategy (i.e. shift contact points to regular points). ChatGPT was also used for computational checks and literature searches. All AI-assisted proofs have been independently verified and refined by the author, and this manuscript contains no AI-generated text.
\end{ai*}

\section{Preliminaries}
Suppose $\Omega \subset \mathbb{R}^n$ is a bounded domain with Lipschitz boundary, unless otherwise stated.

\subsection{Viscosity solutions}
We record a few properties of viscosity solutions to the MSS which we will use in the paper. The most important is Savin's Allard Theorem.
\begin{theorem}[Allard Theorem]\label{Allard}
    Suppose $u \in C^0(\overline{B_1^n(0)};\mathbb{R}^m)$ solves the MSS in $B_1^n(0)$ in the viscosity sense and satisfies
    \[\|u - \ell\|_{L^\infty(B_1^n(0)} \leq \varepsilon,\]
    for some affine map $\ell: \mathbb{R}^n \rightarrow \mathbb{R}^m$, $\ell(x) := b + Ax$. Then there is an $\varepsilon_\ast := \varepsilon_\ast(n,m,|A|) > 0$ such that, if $\varepsilon < \varepsilon_\ast$, then $u \in C^2(B_{1/2}^n(0); \mathbb{R}^m)$ with
    \[\|D^2u\|_{C^\alpha(B_{1/2}^n(0))} \leq C\varepsilon\]
    for some constant $C$ independent of $\varepsilon$.
\end{theorem}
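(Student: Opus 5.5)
This result is quoted from Savin \cite{Sa18}, so the natural plan is to follow Savin's improvement-of-flatness scheme for viscosity solutions. I would argue by compactness and linearization, and then bootstrap.

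\emph{Normalization.} First reduce to $\ell \equiv 0$. Rotate $\mathbb{R}^{n+m}$ so that the graph of $\ell$ becomes $\mathbb{R}^n\times\{0\}$. Since $|A|$ is controlled, the rotated graph is still a graph over a slightly smaller ball, with $L^\infty$ flatness comparable to $\varepsilon$. The viscosity definition is invariant under rigid motions, since comparison surfaces and mean $n$-convexity are geometric. This is why $\varepsilon_\ast$ depends on $|A|$.

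\emph{Key step: improvement of flatness.} I would show there are universal $\rho\in(0,1)$ and $\varepsilon_0>0$ with the following property. If $\|u\|_{L^\infty(B_1)}\le\varepsilon\le\varepsilon_0$, then there is an affine $\ell'$ with $|D\ell'|\le C\varepsilon$ and $\|u-\ell'\|_{L^\infty(B_\rho)}\le \rho^{1+\alpha}\varepsilon$.

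The argument is by contradiction. Take a sequence $u_k$ of viscosity solutions with flatness $\varepsilon_k\to0$ and rescale to $v_k:=u_k/\varepsilon_k$. The heart of the matter is a \emph{Harnack inequality} for flat viscosity graphs. To get it, slide paraboloid-type comparison surfaces $\{H=c\}$ with $H(x,z)=\xi\cdot z-\tfrac{\delta}{2}|x|^2+\dots$. Choose their Hessians so that $\Delta_T H>0$ for every $n$-plane $T$ close to horizontal. The viscosity condition forbids contact with such a strictly mean $n$-convex surface, which gives measure estimates as in the ABP / Krylov–Safonov proof, applied to each scalar projection $\xi\cdot u$.

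Iterating this gives a uniform H\"older modulus for $v_k$ outside scale $\varepsilon_k$. Arzel\`a–Ascoli then yields a limit $v$. Testing with smooth quadratic comparison functions shows that each component of $v$ is a viscosity solution of $\Delta v^\beta=0$, because near-flat planes $T$ make $\Delta_T H$ converge to $\Delta_x$ acting on the $z$-direction. Hence $v$ is harmonic, and so $C^{1,1}$-close to its tangent plane at $0$ on $B_\rho$. This contradicts the assumed failure of the improvement for $k$ large.

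\emph{Iteration and bootstrap.} Iterate the improvement at dyadic scales, re-rotating at each step (the geometric invariance keeps the hypotheses). This gives $u\in C^{1,\alpha}(B_{1/2})$ with $[Du]_{C^\alpha}\le C\varepsilon$.

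Once $u$ is $C^{1,\alpha}$ with small gradient, show it is a classical solution. Flatness at every scale gives unique tangent planes, and the viscosity inequality at points of second-order differentiability passes to the linear system $g^{ij}(Du)u^\beta_{ij}=0$. This system has $C^\alpha$ coefficients close to $\delta^{ij}$ and decouples into scalar equations with a common operator. Viscosity $C^{1,\alpha}$ solutions of scalar equations with H\"older coefficients are $C^{2,\alpha}$ by Caffarelli's Schauder theory. Then standard Schauder estimates give $\|D^2u\|_{C^\alpha(B_{1/2})}\le C\|u\|_{L^\infty}\le C\varepsilon$.

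\emph{Main obstacle.} I expect the Harnack step to be hardest. Because the system is coupled, one must build comparison hypersurfaces that are strictly mean $n$-convex with respect to \emph{all} nearly horizontal $n$-planes $T$, not just the tangent plane. One must also handle the fact that the graph is only continuous, so contact points carry no a priori tangent plane. I would first try $H(x,z)=\xi\cdot z+ \psi(x)$ with $\psi$ a Krylov–Safonov barrier, and absorb the error from tilting $T$ by using $\varepsilon$-flatness to confine the contact planes.
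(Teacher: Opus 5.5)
The paper gives no proof of this statement; it is quoted from Savin \cite{Sa18}. Your overall architecture (Harnack inequality above scale $\varepsilon$, compactness to a constant-coefficient linear limit, improvement of flatness, Schauder) is the right one. There is, however, a genuine gap in the step that carries all the content, and the mechanism you propose for it does not work.

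In the definition, the $n$-plane $T$ is supplied by the solution and is only required to satisfy $T\perp\nabla H(p_0)$. The $\varepsilon$-flatness of $\mathscr{G}_u$ does not confine $T$ at all. So a contradiction needs $\Delta_TH(p_0)>0$ for \emph{every} such $T$, not just the nearly horizontal ones. For your trial function $H(x,z)=\xi\cdot z+\psi(x)$ one has $D^2H=D^2\psi\oplus 0$, which vanishes on the $(m-1)$-dimensional space $\{0\}\times\xi^\perp\subset\nabla H(p_0)^\perp$. Hence $T$ may place up to $m-1$ of its directions there. The definition then only constrains the sum of the $n-m+1$ smallest eigenvalues of (essentially) $D^2\psi$, and nothing at all if $m>n$. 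For instance, if $m\ge n$, a single negative eigenvalue of $D^2\psi$ already gives an admissible $T$ with $\Delta_TH<0$. This is a degenerate condition, not a Pucci inequality, so neither the ABP-type measure estimate nor the Harnack inequality follows. The same misconception sits in your compactness step (``near-flat planes $T$'') and in your $C^{1,\alpha}$ upgrade. In the latter, a $C^{1,\alpha}$ map also need not have any points of second-order differentiability; the viscosity inequality must be checked at every touching point.

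The missing idea is to spend the $L^\infty$-flatness on a penalization in the complementary normal directions (compare $z^1-\varphi(x)+C|\tilde z|^2$ in Lemma~\ref{lem: MSEsolution}). Let $\pi$ be the orthogonal projection of $\R^m$ onto $\xi^\perp$. If $\varphi$ touches $\xi\cdot u+K|\pi u|^2$ from above at $x_0$, then $\mathscr{G}_u$ touches $\{H=0\}$ from the side $\{H\le 0\}$, where $H(x,z)=\xi\cdot z+K|\pi z|^2-\varphi(x)$. Any admissible $T$ then satisfies
\[2K\,\tr\big(\pi_T(0\oplus\pi)\big)\le \tr\big(\pi_T(D^2\varphi(x_0)\oplus 0)\big).\]
Together with $T\perp\nabla H(p_0)$ and smallness of $|\nabla\varphi|+K\varepsilon$, this forces $T$ to be nearly horizontal. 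The inequality then yields $\tr(AD^2\varphi(x_0))\ge0$ with the eigenvalues of $A$ in $[1-\eta,1]$. So $\xi\cdot u+K|\pi u|^2$ and $\xi\cdot u-K|\pi u|^2$ are viscosity sub- and supersolutions, respectively, of Pucci inequalities with constants close to $1$ (for test functions of controlled size). Both differ from $\xi\cdot u$ by $O(K\varepsilon^2)=o(\varepsilon)$. This is what feeds the small-perturbation Harnack inequality and identifies the blow-up limit.

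In the $C^{1,\alpha}$ step, to test $u^1$ write $z=(z^1,\hat z)$, make the contact strict, and penalize with $K|\hat z-\hat\ell(x)|^2$, where $\hat\ell$ is the first-order Taylor map of $(u^2,\ldots,u^m)$ at $x_0$. This costs only $O(K|x-x_0|^{2+2\alpha})$ on the graph. Letting $K\to\infty$ pins $T$ to $T_{p_0}\mathscr{G}_u$ and gives $g^{ij}(Du(x_0))\varphi_{ij}(x_0)\ge0$.

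Finally, your reduction to $\ell\equiv0$ by rotation, and the re-rotations in the iteration, are not available a priori. The rotated graph of a merely continuous $u$ with $|u-\ell|\le\varepsilon$ can fold at scale $\varepsilon$. Instead, work in the original coordinates and linearize about $\ell$. This gives the limit operator $\tr\big((I+A^TA)^{-1}D^2\,\cdot\,\big)$ and is where $\varepsilon_\ast$ acquires its dependence on $|A|$.
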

\begin{remark}\label{rmk:analytic}
    Any $C^2$ viscosity solution is a classical solution (and vice versa) \cite[Proposition 1.3]{Sa18}. Hence, $C^2$ viscosity solutions are analytic by Morrey's Theorem (e.g. \cite[Theorem 2.2]{LO1}).
\end{remark}
In contrast to the Allard Theorem for stationary varifolds, Savin's theorem only needs $L^\infty$ closeness to an affine function. This makes it particularly well suited to studying regularity problems for minimal graphs. We will also make use of the stability property of viscosity solutions.
\begin{proposition}[Stability; \cite{Sa18}, Proposition 1.4]\label{prop: stability}
If $\{u_k\}_1^\infty \subset C^0(\overline{\Omega}; \mathbb{R}^m)$ is a sequence of viscosity solutions for the MSS and $u_k \rightarrow u$ uniformly on compact subsets of $\Omega$, then u is a viscosity solution.
\end{proposition}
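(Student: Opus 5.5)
This is the standard stability argument: touch the limit graph by a strictly mean $n$-convex comparison surface, perturb so the contact is strict, transfer the touching to $u_k$ for large $k$, and pass to the limit in the resulting inequality. Fix $H \in C^2(\mathbb{R}^{n+m})$ such that $\mathscr{G}_u$ touches $\{H = c\}$ from the side $\{H \leq c\}$ at an interior point $p_0 = (x_0,u(x_0))$, with $x_0 \in \Omega$. We must produce an $n$-plane $T \perp \nabla H(p_0)$ with $\Delta_T H(p_0) \leq 0$.

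\textbf{Step 1 (strict touching).} Replace $H$ by
\[H_\eta(p) := H(p) + \eta|p-p_0|^4 .\]
Then $H_\eta \circ \psi_u$ has a strict local maximum at $x_0$ on a small closed ball $\overline{B_\rho(x_0)} \subset \Omega$. Moreover, $H_\eta$ agrees with $H$ at $p_0$ to second order, so the gradient and Hessian are unchanged there. We could even drop the perturbation at the end, but keeping it makes Step 2 cleaner.

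\textbf{Step 2 (transfer to $u_k$).} By uniform convergence on $\overline{B_\rho(x_0)}$, the continuous functions $H_\eta \circ \psi_{u_k}$ attain their maximum over $\overline{B_\rho(x_0)}$ at points $x_k$. Strictness of the maximum of $H_\eta \circ \psi_u$ forces $x_k \to x_0$, so $x_k$ is interior for large $k$. Set $p_k := \psi_{u_k}(x_k)$ and $c_k := H_\eta(p_k)$. Locally, $\mathscr{G}_{u_k}$ then touches $\{H_\eta = c_k\}$ from the side $\{H_\eta \leq c_k\}$ at $p_k$, and $p_k \to p_0$.

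Here is the one subtlety. The definition presumably requires $\{H = c\}$ to be a genuine hypersurface near the contact point, i.e. $\nabla H(p_0) \neq 0$; if $\nabla H(p_0) = 0$, the condition $T \perp \nabla H(p_0)$ is vacuous. We assume $\nabla H_\eta(p_0) = \nabla H(p_0) \neq 0$, so by continuity $\nabla H_\eta(p_k) \neq 0$ for large $k$. Applying the definition of viscosity solution to $u_k$ gives $n$-planes $T_k \perp \nabla H_\eta(p_k)$ with $\Delta_{T_k} H_\eta(p_k) \leq 0$.

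\textbf{Step 3 (compactness).} The Grassmannian $G(n,n+m)$ is compact, so after passing to a subsequence $T_k \to T$. The orthogonality condition is closed, so $T \perp \nabla H_\eta(p_0) = \nabla H(p_0)$. The map $(p,T) \mapsto \Delta_T H_\eta(p) = \tr\big(\pi_T D^2H_\eta(p)\pi_T\big)$ is continuous. Hence
\[\Delta_T H(p_0) = \Delta_T H_\eta(p_0) = \lim_k \Delta_{T_k} H_\eta(p_k) \leq 0,\]
which is what we needed.

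\textbf{Main obstacle.} Step 2 is the only delicate point: we must ensure that the contact for $u_k$ is local and interior, and that the level set stays a regular hypersurface. Strictness of the maximum from Step 1 together with the nonvanishing gradient handles both. No Lipschitz bound on the $u_k$ is needed, since only uniform convergence enters.
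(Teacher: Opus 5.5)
Your argument is the standard stability argument for viscosity solutions, adapted to the geometric definition; the paper gives no proof of its own and defers to Savin. Your direct limit of the planes $T_k$, using compactness of $G(n,\mathbb{R}^{n+m})$, replaces the usual contradiction argument based on openness of strict mean $n$-convexity, and amounts to the same thing. Steps 2 and 3 are correct as written.

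Step 1, however, has a sign error that breaks the argument as stated. Touching from the side $\{H\le c\}$ means $H\circ\psi_u\le c$ near $x_0$, i.e. a local \emph{maximum}. The strictifying perturbation must therefore be $H_\eta(p)=H(p)-\eta|p-p_0|^4$.

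With your $+$ sign, $H_\eta\circ\psi_u$ need not have a strict maximum at $x_0$. For example, if $H\circ\psi_u\equiv c$ near $x_0$, it has a strict \emph{minimum} there. The maximizers $x_k$ in Step 2 would then be pushed toward $\partial B_\rho(x_0)$ rather than converging to $x_0$.

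With the minus sign you get $H_\eta\circ\psi_u(x)\le c-\eta|\psi_u(x)-p_0|^4\le c-\eta|x-x_0|^4$. The gradient and Hessian at $p_0$ are still unchanged, and the rest of your proof goes through verbatim.

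Your hedge about $\nabla H(p_0)=0$ is harmless; Steps 2--3 cover either reading of the definition. If the definition only concerns nonvanishing gradients, your continuity argument is exactly what is needed. If it also applies when the gradient vanishes, it still produces planes $T_k$ at $p_k$, and the limiting orthogonality condition is then vacuous.
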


From Theorem \ref{Allard}, a Taylor expansion near any point of differentiability shows that a viscosity solution is analytic in any neighborhood of a point of differentiability. Thus, Lipschitz viscosity solutions are analytic at $\mathcal{H}^n$-a.e. interior points by Rademacher's Theorem. Furthermore, If $\sing(u)$ represents the set of interior points at which $u$ fails to be analytic (i.e. its \emph{singular set}), then $\sing(u)$ is relatively closed in the domain of $u$. Analogously, the set of interior regular points for $u$, defined as the relative complement of $\sing(u)$ in its domain, is open.

\subsection{$(n,0)$-sets}
We define $(n,0)$-sets and state the results of Santilli relevant to this paper. For a thorough treatment, see their original work \cite{San20} (see also \cite{Wh16}).

\begin{definition}[An $(n,0)$-set]\label{def: (n,0)}
A relatively closed set $S$ in an open set $U \subset\R^{n+m}$ is called an \emph{$(n,0)$-set}
if, for every $H \in C^2(\mathbb{R}^{n+m})$ and any local maximum $p_0$ of $H\lvert_S$ such that $\nabla H(p_0) \neq 0$, we have
\[
 \mu_1(p_0)+\cdots+\mu_n(p_0)\le0
\]
where $\mu_i(p_0)$ are the $n$ smallest eigenvalues of $D^2H(p_0)$ ordered least to greatest. 
\end{definition}
\begin{remark}
    Notice that, similarly to Savin's viscosity solutions, $(n,0)$-sets are defined with the barrier principle in mind. 
\end{remark}
An important property of $(n,0)$-sets that we will take advantage of is that they have a well-defined notion of normal bundle.

\begin{definition}[Generalized Unit Normal Bundle]
For a closed set $S \subset\R^{n+m}$, its \emph{generalized unit normal bundle} is
\begin{equation}\label{eq:normal}
 N(S)=\{(p,\nu)\in S\times \mathbb{S}^{n+m-1}:
       \dist(p+t\nu,S)=t\text{ for some }t>0\}.
\end{equation}
In other words, the generalized unit normal bundle consists of the ordered pairs $(p,\nu) \in S \times \mathbb{S}^{n+m-1}$ such that $B_{t}^{n+m}(p + t\nu) \cap S = \{p\}$ for some $t > 0$. Notice that the definition is independent of whether $T_pS$ is well-defined, allowing us to define unit normal vectors at singular points for $S$. We call $\nu$ a \emph{proximal unit normal} for $S$. If $Z\subset S$, we
write $N(S)|_Z=\{(p,\nu)\in N(S):p\in Z\}$.
\end{definition}

The next theorem says that if $Z$ is $\mathcal{H}^n$-null, then the set of proximal unit normals to $Z$ is $\mathcal{H}^{n+m-1}$-null in the unit sphere.\footnote{Santilli proves a more general statement, of which Theorem \ref{thm: santilli} is a special case. However, we will only use the theorem in the form stated above.} 
\begin{theorem}[\cite{San20}, Theorem 3.8]\label{thm: santilli}
Let $S$ be a relatively closed $(n,0)$-set in an open set $U \subset\R^{n+m}$,
and suppose $S$ is a countable union of sets of finite $\mathcal{H}^n$-measure.
Let $K=\overline S$, with closure taken in $\R^{n+m}$. If $Z\subset S$ is a Borel set and
$\mathcal{H}^n(Z)=0$, then
\begin{equation*}\label{eq: lusin}
 \mathcal{H}^{n+m-1}\bigl(N(K)|_Z\bigr)=0.
\end{equation*}
\end{theorem}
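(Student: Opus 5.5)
The plan is to reduce the Lusin property to a Jacobian statement for the footpoint projection on the normal bundle. The standard Federer/Hug--Last--Santilli machinery supplies the structure of $N(K)$, and the $(n,0)$ condition supplies the one geometric input.

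\textbf{Step 1: structure of $N(K)$.} First I recall that $N(K)$ is countably $(n+m-1)$-rectifiable. Write $N(K) = \bigcup_{j} N_j$, where $N_j$ consists of those $(p,\nu)$ for which $\dist(p + \nu/j, K) = 1/j$. On the parallel set $\{x : \dist(x,K) = 1/j\}$ the nearest-point map is Lipschitz on the points with reach at least $1/j$. Hence $x \mapsto (\xi_K(x), j(x - \xi_K(x)))$ gives a Lipschitz parametrization of $N_j$ by a subset of a Lipschitz hypersurface.

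Consequently, at $\mathcal{H}^{n+m-1}$-a.e. $(p,\nu) \in N(K)$ there is an approximate tangent space. One can attach generalized principal curvatures $\kappa_1 \le \dots \le \kappa_{n+m-1}$ with values in $(-\infty, \infty]$; after restricting to $N_j$ they lie in $[-j,\infty]$. The approximate tangent space of $N(K)$ is spanned by the vectors
\[
\Bigl(\tfrac{1}{\sqrt{1+\kappa_i^2}} e_i,\; \tfrac{\kappa_i}{\sqrt{1+\kappa_i^2}} e_i\Bigr),
\]
with the usual convention when $\kappa_i = \infty$.

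For the footpoint projection $\pi(p,\nu) = p$, the $n$-dimensional approximate Jacobian $J_n\pi(p,\nu)$ is then positive precisely when at least $n$ of the $\kappa_i$ are finite.

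\textbf{Step 2: coarea on the null set.} I apply the coarea formula to $\pi$ restricted to the rectifiable set $N(K)|_Z$:
\[
\int_{N(K)|_Z} J_n \pi \, d\mathcal{H}^{n+m-1} = \int_{Z} \mathcal{H}^{m-1}\bigl(\pi^{-1}(p) \cap N(K)\bigr)\, d\mathcal{H}^n(p) = 0,
\]
which uses $\mathcal{H}^n(Z)=0$. The hypothesis that $S$ is a countable union of sets of finite $\mathcal{H}^n$-measure is what allows the coarea formula to apply. It also ensures that the points of $N(K)$ over $K \setminus S$, i.e.\ boundary points in $\partial U$, do not interfere, since $Z \subset S$ and we may localize. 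The conclusion is that $J_n\pi = 0$ a.e.\ on $N(K)|_Z$. Hence at a.e.\ point of $N(K)|_Z$, strictly fewer than $n$ curvatures are finite, so at least $m$ of them are $+\infty$. It therefore suffices to show that this last set is $\mathcal{H}^{n+m-1}$-null in $N(K)|_S$.

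\textbf{Step 3 (main obstacle): the $(n,0)$ condition forces at least $n$ finite curvatures a.e.} Suppose, on a set of positive measure, at least $m$ curvatures are infinite. At a density point $(p,\nu)$ of that set with well-defined curvatures, the set $K$ near $p$ lies, to second order, inside the touching ball $B_{t}(p+t\nu)$. In at least $m$ tangential directions it is infinitely flattened, i.e.\ contained in a cusp around an affine space of dimension at most $n-1$.

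I would then build a comparison function
\[
H(x) = \nu\cdot(x-p) - \tfrac{A}{2}|P(x-p)|^2 + \tfrac{\lambda}{2}|Q(x-p)|^2,
\]
where $Q$ projects onto the span of the (at most $n-1$) finite-curvature directions together with $\nu$, $P$ projects onto the $\ge m$ infinite-curvature directions, $A \gg 1$, and $\lambda$ is chosen from the finite $\kappa_i$ and the radius $t$. Small perturbations of $H$ still have a local max on $S$ near $p$, because $K$ is confined to the cusp. The Hessian $D^2H$ has at most $n-1$ eigenvalues below $\lambda$ that are not of order $-A$. The goal is to arrange that the sum of its $n$ smallest eigenvalues is positive, which contradicts Definition \ref{def: (n,0)}.

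The delicate part is that the $n$ smallest eigenvalues include some of the $-A$ directions. So the construction must instead exploit that $K$ is contained in a cusp that is thinner than any paraboloid. That lets us take $H$ strictly convex in all but $n-1$ directions with a uniform positive lower bound, while still having a local maximum of $H\lvert_S$. Making this second-order "thinness" rigorous from a.e.\ curvature information, e.g.\ via Lebesgue density and the Lipschitz parametrization of $N_j$ from Step 1, is where I expect the real work to lie. Once it holds, Steps 2 and 3 combine to give $\mathcal{H}^{n+m-1}(N(K)|_Z)=0$.
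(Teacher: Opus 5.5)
The paper does not prove this statement; it imports it from Santilli, so your proposal has to stand on its own.

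\textbf{Steps 1--2.} These are the right reduction and are essentially correct. The coarea formula for the footpoint map on the rectifiable set $N(K)|_Z$ forces $J_n\pi=0$ a.e.\ there. So everything reduces to showing $\kappa_n<\infty$ at $\mathcal{H}^{n+m-1}$-a.e.\ $(p,\nu)\in N(K)|_S$. Two minor points. First, $\xi_K$ is Lipschitz only where the normal segment extends a fixed factor beyond the point, so parametrize $N_j$ through $p+\nu/(2j)$, not $p+\nu/j$. Second, the $\sigma$-finiteness of $S$ is not what makes the coarea formula applicable; that needs only the rectifiability of $N(K)$ and $\mathcal{H}^n(Z)=0$.

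\textbf{Step 3 has a genuine gap.} Step 3 is the whole theorem. The curvatures $\kappa_i(p,\nu)$ are read off from the approximate tangent space of $N(K)$, which is a measure-theoretic object. Neither density points nor Rademacher applied to your Lipschitz parametrization of $N_j$ controls $K$ pointwise. They only see points of $K$ carrying normals of reach $\ge 1/j$ near $\nu$, while $K$ may have other points arbitrarily close to $p$. So the claimed inclusion of $K$ in a cusp around an affine space of dimension $\le n-1$ is unsupported. Without a pointwise inclusion you cannot certify that any $H$ has a local maximum on $S$ at $p$, and that is the only way the $(n,0)$ condition can enter.

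The barrier as first written also cannot work. Putting $-A$ on the infinite directions, and lumping $\nu$ with the finite ones, places negative eigenvalues among the $n$ smallest. The infinite directions and $\nu$ must carry a large \emph{positive} Hessian, which is admissible precisely because of the cusp.

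\textbf{The missing ingredient.} You need genuinely pointwise second-order information, and it comes from the distance function rather than from $N(K)$.
\begin{itemize}
\item Since $\delta_K$ is semiconcave on $\{\delta_K>0\}$, Alexandrov's theorem and the coarea formula show that for a.e.\ $r>0$, $\delta_K$ is twice differentiable at $\mathcal{H}^{n+m-1}$-a.e.\ point of $\{\delta_K=r\}$.
\item Pull back by the Lipschitz map $x\mapsto(\xi_K(x),(x-\xi_K(x))/r)$ and use countably many $r$. Then for a.e.\ $(p,\nu)\in N(K)$ there is an $r$ below half the reach of $(p,\nu)$ with $\delta_K$ twice differentiable at $x=p+r\nu$. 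Again a.e., the eigenvalues of $D^2\delta_K(x)$ on $\nu^\perp$ are $\kappa_i/(1+r\kappa_i)$, with the value $1/r$ corresponding to $\kappa_i=\infty$.
\item The expansion $\delta_K(x+w)\ge r+\tfrac12 D^2\delta_K(x)(w,w)-\epsilon|w|^2$ holds for \emph{every} $w\in\nu^\perp$ with $|w|<\eta_\epsilon$. It says $K$ misses all balls centred at $x+w$ with these radii.
\item Optimizing the centre gives $K\cap B(p,\eta'_\epsilon)\subset\{z:\nu\cdot(z-p)\le C|z-p|^2-c\,\epsilon^{-1}|P(z-p)|^2\}$. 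Here $C$ depends on $r$ and the reach, $c>0$ depends only on $r$, and $P$ projects onto the infinite-curvature directions. This is your cusp.
\end{itemize}

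\textbf{Completing the argument.} Let $Q$ project onto the $d\le n-1$ finite directions, and set $H(z)=\nu\cdot(z-p)-\tfrac{\lambda}{2}|Q(z-p)|^2+\tfrac{\Lambda}{2}|(I-Q)(z-p)|^2$. Choose $\lambda$ large depending on $C$, then $\Lambda>(n-1)\lambda$, then $\epsilon$ small. Then $H$ has a local maximum on $S$ at $p$ (since $K=S$ near $p$) and $\nabla H(p)=\nu\neq 0$. The $n$ smallest eigenvalues of $D^2H(p)$ sum to $-d\lambda+(n-d)\Lambda>0$, contradicting the $(n,0)$ property.

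With this inserted, your outline becomes a complete proof. This pointwise second-order structure of arbitrary closed sets is the kind of input Santilli's argument draws on.
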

\begin{remark}
    In the proof of Lemma \ref{lem: decoupling2}, we will take $S = \mathscr{G}_u$ and $Z = \psi_u(\sing(u))$ for a given Lipschitz viscosity solution $u$, where $\psi_u$ is the graph map. 
\end{remark}

Let $G(n, \mathbb{R}^{n+m})$ represent the Grassmannian of $n$-planes passing through the origin in $\mathbb{R}^{n+m}$. In the proof of the next proposition, we will need the following variational identity: if $H \in C^2(\mathbb{R}^{n+m})$, then
\begin{equation}
   \min_{T \in G(n,\mathbb{R}^{n+m})} \tr_T D^2H(p) = \mu_1(p) + \cdots + \mu_n(p). \label{eq: trace}
\end{equation}
\begin{proposition}\label{prop: euivalent}
     A function $u \in C^0(\overline \Omega;\mathbb{R}^m)$ solves the MSS in $\Omega$ in the viscosity sense if and only if  $\mathscr{G}_u$ is an $(n,0)$-set in $\mathbb{R}^{n+m}$.
\end{proposition}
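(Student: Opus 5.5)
We prove the two implications separately, translating between ``touching from the side $\{H\le c\}$'' and ``local maximum of $H|_{\mathscr{G}_u}$''. The bridge between the two curvature conditions is the identity \eqref{eq: trace}. The one point requiring care is that the viscosity definition produces an $n$-plane $T$ orthogonal to $\nabla H(p_0)$, whereas the $(n,0)$ condition minimizes the trace over all $n$-planes. We handle this by a penalization that turns the unconstrained condition into the constrained one.

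\emph{($(n,0)$-set $\Rightarrow$ viscosity solution.)} Suppose $\mathscr{G}_u$ touches $\{H=c\}$ from the side $\{H\le c\}$ at an interior point $p_0$. Then $p_0$ is a local maximum of $H|_{\mathscr{G}_u}$.
\begin{itemize}
\item If $\nabla H(p_0)=0$, we replace $H$ by $H_\varepsilon(p)=H(p)+\varepsilon e\cdot(p-p_0)-\varepsilon^{-1}|p-p_0|^4$ for a unit vector $e$. Then $\nabla H_\varepsilon(p_0)\ne0$, $D^2H_\varepsilon(p_0)=D^2H(p_0)$, and $p_0$ becomes a strict local maximum only up to the linear term. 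It is cleaner to first make the maximum strict by subtracting $|p-p_0|^4$, then tilt by $\varepsilon e$ and relocate the maximum to nearby points $p_\varepsilon\to p_0$. Passing to the limit in the $(n,0)$ inequality at $p_\varepsilon$ gives $\mu_1+\dots+\mu_n\le0$ for $D^2H(p_0)$, and any $T$ then works since $\nabla H(p_0)=0$.
\item If $\nabla H(p_0)\neq0$, we apply the $(n,0)$ condition to $H_\lambda=H+\lambda(H-H(p_0))^2$. The maximum at $p_0$ is preserved, the gradient is unchanged, and $D^2H_\lambda(p_0)=D^2H(p_0)+2\lambda\nabla H\otimes\nabla H$. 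By \eqref{eq: trace} there is $T_\lambda\in G(n,\R^{n+m})$ with
\[
\tr_{T_\lambda}D^2H(p_0)+2\lambda|\pi_{T_\lambda}\nabla H(p_0)|^2\le0 .
\]
As $\lambda\to\infty$, the term $|\pi_{T_\lambda}\nabla H|^2=O(1/\lambda)$. Compactness of the Grassmannian then yields a limit plane $T\perp\nabla H(p_0)$ with $\Delta_TH(p_0)\le0$.
\end{itemize}

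\emph{(Viscosity solution $\Rightarrow$ $(n,0)$-set.)} Let $p_0$ be a local maximum of $H|_{\mathscr{G}_u}$ with $\nabla H(p_0)\ne0$. We may again make the maximum strict by subtracting $\eta|p-p_0|^4$, which changes neither $\nabla H(p_0)$ nor $D^2H(p_0)$. Near $p_0$ the set $\{H=H(p_0)\}$ is a $C^2$ hypersurface, and $\mathscr{G}_u$ lies locally in $\{H\le H(p_0)\}$ and touches at $p_0$. We localize $H$ by a cutoff so that the touching is as in the definition; only local behavior matters for a comparison surface.

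The viscosity definition gives $T\perp\nabla H(p_0)$ with $\tr_TD^2H(p_0)\le0$. Hence
\[
\mu_1+\dots+\mu_n=\min_{T'}\tr_{T'}D^2H(p_0)\le\tr_TD^2H(p_0)\le0
\]
by \eqref{eq: trace}.

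\emph{Remaining checks and main obstacle.} Two bookkeeping points remain. First, relative closedness of $\mathscr{G}_u$ in the open set $\Omega\times\R^m$ holds by continuity of $u$. Second, points of $\psi_u(\partial\Omega)$ must be dealt with: either take $U=\Omega\times\R^m$, or note that maxima there are excluded by interpreting both definitions at interior points. The main obstacle is the degenerate case $\nabla H(p_0)=0$ in the first implication, where the perturbation must move the maximum while keeping it interior and controlling the Hessian in the limit. We expect this to work by the strict-maximum plus small-tilt argument, using continuity of the eigenvalues of $D^2H$.
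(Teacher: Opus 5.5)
Your two main arguments match the paper's. The direction from viscosity solution to $(n,0)$-set is the same comparison through the trace identity; the paper argues by contradiction, you argue directly. Your penalization $H+\lambda(H-H(p_0))^2$ does exactly what the paper's $(e^{tH}-1)/t$ does: it keeps the local maximum and the gradient at $p_0$ and adds $2\lambda\,\nabla H(p_0)\otimes\nabla H(p_0)$ to the Hessian. The $O(1/\lambda)$ bound on $|\pi_{T_\lambda}\nabla H(p_0)|^2$ and compactness of the Grassmannian then finish, as in the paper. Your remark that the $(n,0)$ property has to be read in $U=\Omega\times\R^m$ is correct and is left implicit in the paper.

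The one step that fails as written is your treatment of $\nabla H(p_0)=0$. The paper simply excludes this case ``without loss of generality,'' since comparison surfaces are hypersurfaces. In your version you subtract $|p-p_0|^4$ and tilt by a \emph{fixed} vector $\varepsilon e$. Nothing then guarantees that the tilted function has nonzero gradient at the new maximum $p_\varepsilon$, because $\nabla\tilde H(p_\varepsilon)$ is itself small and can equal $-\varepsilon e$.

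Here is a concrete failure. Take $H(p)=-\tfrac12|p-p_0|^2$ and let $\mathscr{G}_u$ be an affine $n$-plane through $p_0$ containing the direction $e$. The tilted function is strictly concave on that plane, and its full gradient lies in the plane. So its maximum on $\mathscr{G}_u$ is the point $p_0+se$ with $s+4s^3=\varepsilon$, where the full gradient vanishes, and the $(n,0)$ inequality cannot be invoked there.

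The repair is to tilt generically:
\begin{itemize}
\item The map $\nabla\tilde H$ is locally Lipschitz, and $\mathscr{G}_u$ is $\mathcal{L}^{n+m}$-null by Fubini, since each vertical fiber is a single point.
\item Hence $-\nabla\tilde H\bigl(\mathscr{G}_u\cap\overline{B_r^{n+m}(p_0)}\bigr)$ is Lebesgue-null. For a.e.\ small $a$, the function $\tilde H+a\cdot(p-p_0)$ therefore has no critical points on $\mathscr{G}_u$ near $p_0$.
\item The $(n,0)$ inequality then holds at the maxima $p_a$, and $p_a\to p_0$ because the maximum was made strict.
\item Continuity of $X\mapsto\mu_1(X)+\dots+\mu_n(X)$, which is a minimum of traces over $n$-planes, gives $\mu_1+\dots+\mu_n\le0$ for $D^2H(p_0)$.
\item At that point you need the minimizing plane from the trace identity, not an arbitrary $T$.
\end{itemize}
This is the same generic-tilt mechanism the paper uses in the proof of the Decoupling Lemma.
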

\begin{proof}
    We prove each implication separately.

   \medskip\noindent$\Rightarrow$: Relative closedness is immediate. Suppose $H \in C^2(\mathbb{R}^{n+m})$ is such that $H \lvert_{\mathscr{G}_u}$ has a local maximum at $p_0 := (x_0, u(x_0))$ and $\nabla H(p_0) \neq 0$. Set $m := H(p_0)$ and suppose $\sum_1^n \mu_i(p_0) > 0$. Without loss of generality, we may assume $m=0$. We claim that $H$ is a comparison function.
    
    Indeed, the variational identity \eqref{eq: trace} implies that
    \[0 < \tr_T D^2H(p_0) = \Delta_T H(p_0)\]
    whenever $T \in G(n,\mathbb{R}^{n+m})$ satisfies $T \perp \nabla H(p_0)$. Since $H \in C^2$, this identity holds in a neighborhood $U$ of $p_0$ in $\mathbb{R}^{n+m}$. Moreover, $H(x,u(x)) \leq 0$ for each $x \in \Omega$ near $x_0$ and $H((x_0,u(x_0)) = 0$. Hence, $\mathscr{G}_u$ is touched by $\{H = 0\}$ from the side $\{H \leq 0\}$. This contradicts that $u$ is a viscosity solution to the MSS.
    
    \medskip\noindent$\Leftarrow$: Suppose now that $\mathscr{G}_u$ is an $(n,0)$-set in $\mathbb{R}^{n+m}$. Let $H \in C^2(\mathbb{R}^{n+m})$ be such that $\mathscr{G}_u \subset \{H \leq 0\}$ with $H(x_0,u(x_0)) = 0$ for some $x_0 \in \Omega$. Set $p_0 := (x_0, u(x_0))$. Without loss of generality, we may assume $\nabla H(p_0) \neq 0$. We need to find an $n$-plane $T \perp \nabla H(p_0)$ such that 
    \[\tr_T D^2 H(p_0) \leq 0.\]
    
    For each $t > 0$, define
    \[H_t(x,z) = \frac{e^{tH(x,z)} - 1}{t}.\]
    Since the real function
    \[s \mapsto \frac{e^{ts}-1}{t}\]
    is strictly increasing in $s \in \mathbb{R}$, we know that for each $t$ we have $\mathscr{G}_u \subset \{H_t \leq 0\}$ with $H_t(p_0) = 0$. Direct computation yields
    \[\nabla H_t(p_0) = \nabla H(p_0), \quad D^2 H_t(p_0) = D^2H(p_0) + t \nabla H(p_0) \otimes \nabla H(p_0).\]
    Since $\mathscr{G}_u$ is an $(n,0)$-set, we know that 
    \[\mu_1^t(p_0) + \cdots + \mu_n^t(p_0) \leq 0 \text{ for each } t,\]
    where the $\mu_i^t(p_0)$ are the $n$-smallest eigenvalues of $D^2H_t(p_0)$ ordered least to greatest. 
    
    For each $t$, let $\{e_1^t,\ldots, e_n^t\}$ be an orthonormal basis of eigenvectors for $D^2H_t(p_0)$ corresponding to the eigenvalues $\mu_1^t(p_0),\ldots, \mu_n^t(p_0)$. Define the $n$-plane $T_t := \Span\{e_1^t,\ldots, e_n^t\} \in G(n,\mathbb{R}^{n+m})$. Then 
    \[\tr_{T_t} D^2H(p_0) + t|\pi_{t} \nabla H(p_0)|^2 \leq \tr_{T_t} D^2H_t(p_0) \leq 0 \text{ for each } t,\]
    where $\pi_t$ denotes orthogonal projection onto $T_t$. On the other hand,
    \[\tr_{T_t} D^2H(p_0) \geq -n\|D^2H(p_0)\|_\infty\]
    implying 
    \[|\pi_t \nabla H(p_0)|^2 \leq \frac{n\|D^2H(p_0)\|_\infty}{t} \rightarrow 0 \text{ as } t \rightarrow +\infty.\]
    By compactness of the Grassmannian, there is a sequence $t_j \rightarrow \infty$ and an $n$-plane $T \in G(n,\mathbb{R}^{n+m})$ such that $\|\pi_{t_j} - \pi_T\| \rightarrow 0$ as $j \rightarrow \infty$, where $\|\cdot \|$ is the operator norm and $\pi_T$ is orthogonal projection onto $T$. In particular, we have $\pi_T \nabla H(p_0) = 0$ so 
    \[\tr_{t_j} D^2H(p_0) = \tr(\pi_{t_j} D^2H(p_0)) \rightarrow \tr(\pi_T D^2H(p_0)) = \tr_T D^2 H(p_0) \leq 0.\]
    Since the condition $\pi_T\nabla H(p_0) = 0$ implies $T \perp \nabla H(p_0)$, $u$ is a viscosity solution. 
\end{proof}

While a relationship between $(n,0)$-graphs and viscosity graphs has been observed (e.g. see \cite{San20}), it is unclear if a proof regarding their equivalency has been previously recorded.

\section{The Decoupling Lemma and its consequences}
This section contains the proofs of the Decoupling Lemma and its consequences listed in the introduction. To state the lemma precisely, we need to introduce the \emph{Pucci extremal operators} (see \cite{Mo19} for a helpful exposition). 

Let $0 < \lambda \leq \Lambda < \infty$. For a matrix $X\in\Sym(n)$, we define 
\begin{equation}\label{eq:pucci}
 \mathcal{M}^+_{\lambda,\Lambda}(X)=\sup_{\lambda I\le A\le I \Lambda}\tr(AX),
 \qquad
 \mathcal{M}^-_{\lambda,\Lambda}(X)=\inf_{\lambda I\le A\le I \Lambda}\tr(AX).
\end{equation}
Here, the matrices $A$ in the supremum and infimum are assumed symmetric and the notation $A \geq B$ for $A,B \in \Sym(n)$ means $A - B$ is positive semi-definite. Equivalently, if $\mu_i$ are the (real) eigenvalues of $X$, then
\begin{equation}
 \mathcal{M}^+_{\lambda,\Lambda}(X)=\Lambda\sum_{\mu_i>0}\mu_i+
 \lambda\sum_{\mu_i<0}\mu_i,
 \qquad
 \mathcal{M}^-_{\lambda,\Lambda}(X)=\lambda\sum_{\mu_i>0}\mu_i+
 \Lambda\sum_{\mu_i<0}\mu_i.
\end{equation}
The operator $\mathcal{M}_{\lambda,\Lambda}^+$ is called the \emph{maximal Pucci operator}, and $\mathcal{M}_{\lambda,\Lambda}^-$ is called the \emph{minimal Pucci operator}.

Let $A(x) := (a^{ij}(x))_{i,j = 1,\ldots,n} \in \Sym(n)$ be a matrix of measurable functions defined on a Lipschitz domain $\Omega$ such that $\lambda I \leq A \leq \Lambda I$ in $\Omega$. Observe that, from \eqref{eq:pucci}, if $u \in C^2(\Omega)$ solves 
\begin{equation}\label{eq: homolinear}
    a^{ij}(x)u_{x_i x_j}(x) = 0 \text{ in } \Omega
\end{equation}
then $u$ satisfies the \emph{homogeneous Pucci inequalities}
\begin{equation}\label{eq: homoPucci}
    \mathcal{M}_{\lambda,\Lambda}^-(D^2u(x)) \leq 0 \leq \mathcal{M}_{\lambda,\Lambda}^+(D^2u(x)) \text{ in } \Omega.
\end{equation}
In the converse direction, if $u \in C^2(\Omega)$ satisfies \eqref{eq: homoPucci} then $u$ solves \eqref{eq: homolinear} for \emph{some} measurable coefficient matrix $A$ with eigenvalues in $[\lambda, \Lambda]$. 

For $u \in C^0(\Omega)$, we say that $\varphi \in C^2(\Omega)$ \emph{touches $u$ from above} at $x_0$ if $\varphi(x) - u(x) \geq 0$ in $\Omega$ with equality at $x_0$. Likewise, we say that $\varphi$ \emph{touches $u$ from below} at $x_0$ if instead $\varphi(x) - u(x) \leq 0$ in $\Omega$. Due to the preceding discussion, we define a weak notion of solution by transferring derivatives to $C^2$ functions $\varphi$ which touch $u$ from above or below at points in $\Omega$, much like what was done for the system. Indeed, we say that $u$ is a \emph{viscosity solution} in $\Omega$ if the following two conditions hold:
\begin{itemize}
    \item[(i)] If $\varphi \in C^2(\Omega)$ touches $u$ from above at $x_0$, then $\mathcal{M}_{\lambda,\Lambda}^+(D^2 \varphi(x_0)) \ge 0$.
    \item[(ii)] If $\varphi \in C^2(\Omega)$ touches $u$ from below at $x_0$, then $\mathcal{M}_{\lambda, \Lambda}^-(D^2\varphi(x_0)) \le 0$.
\end{itemize}
When $u$ satisfies (i) $u$ is called a \emph{viscosity subsolution}, and when $u$ satisfies (ii) $u$ is called a \emph{viscosity supersolution}. Loosely speaking, the set of all viscosity solutions in $\Omega$ can be viewed as the collection of all solutions to equations \eqref{eq: homolinear}. We will show that each component function of a Lipschitz viscosity solution to the MSS satisfies the homogeneous Pucci inequalities. Hence, component functions $u^\beta$ of Lipschitz viscosity solutions formally solve \eqref{eq: homolinear} for matrices $A_\beta = (a_\beta^{ij})$.

\begin{lemma}\label{lem: decoupling2}
    Suppose $u \in \Lip(\Omega; \mathbb{R}^m) \cap C^0(\overline{\Omega};\mathbb{R}^m)$ solves the MSS in $\Omega$ in the viscosity sense and suppose $\Lip u = M$. Then for each $\beta \in \{1,\ldots,m\}$, the component functions $u^\beta$ of $u$ satisfy the homogeneous Pucci inequalities \eqref{eq: homoPucci} for $\lambda_M := (1 + M^2)^{-1}$ and $\Lambda = 1$.
\end{lemma}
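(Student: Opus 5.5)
By symmetry ($u\mapsto -u$ is again a viscosity solution) it suffices to prove the subsolution inequality. Suppose $\varphi\in C^2$ touches $u^\beta$ from above at $x_0$ and, for contradiction, $\mathcal M^+_{\lambda_M,1}(D^2\varphi(x_0))<0$. I would argue in three steps: first perturb the touching so that contact happens at a regular point of $u$, then compute there using the classical equation \eqref{MSS2}, and finally pass to the limit.

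\textbf{Step 1: strict touching.} Replace $\varphi$ by $\varphi+\eta|x-x_0|^2$ with $\eta$ small, so that $\mathcal M^+$ stays negative in a small ball $B_\rho(x_0)$ and the contact is strict. Define the comparison function
\[
H(x,z)=z^\beta-\varphi(x)
\]
on $\mathbb R^{n+m}$. Then $\mathscr G_u\cap(B_\rho\times\mathbb R^m)\subset\{H\le 0\}$, with equality only at $p_0$.

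\textbf{Step 2: move the contact point to a regular point.} This is the key step, and I expect it to be the main obstacle. Consider the tilted functions
\[
H_{a,b}(x,z)=H(x,z)+a\cdot x+b\cdot z
\]
for small $(a,b)\in\mathbb R^{n+m}$. By strictness, each $H_{a,b}$ attains its maximum over $\mathscr G_u\cap\overline{B_\rho}\times\mathbb R^m$ at an interior point $p_{a,b}\to p_0$. We need some such contact point to lie in $\psi_u(\Omega\setminus\sing u)$.

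The mechanism is to relate contact points to proximal normals. At a maximum $p$ of $H_{a,b}|_{\mathscr G_u}$ the sublevel set $\{H_{a,b}\le H_{a,b}(p)\}$ contains $\mathscr G_u$ locally. Its boundary is a $C^2$ hypersurface, so it admits an interior tangent ball of small radius. Hence the unit normal $\nu=\nabla H_{a,b}(p)/|\nabla H_{a,b}(p)|$ is a proximal normal, that is, $(p,\nu)\in N(K)$. The map $(a,b)\mapsto\nu$ sends an open set of parameters onto an open set of directions in $\mathbb S^{n+m-1}$. Since $\nabla H_{a,b}=\nabla H+(a,b)$, varying $(a,b)$ radially only changes the length, while varying the direction sweeps a neighbourhood of $\nabla H(p_0)/|\nabla H(p_0)|$.

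Now apply Theorem \ref{thm: santilli}, which is available because $\mathscr G_u$ is an $(n,0)$-set by Proposition \ref{prop: euivalent}, has locally finite $\mathcal H^n$ measure since $u$ is Lipschitz, and $\mathcal H^n(\psi_u(\sing u))=0$ by Rademacher. It gives that the set of normals $\nu$ attached to singular points has $\mathcal H^{n+m-1}$-measure zero. For parameters $(a,b)$ whose normal direction avoids this null set, $p_{a,b}$ must be regular. The delicate part is ensuring that the null set of directions pulls back to a set of parameters $(a,b)$ that is not everything. It suffices to fix $|(a,b)|$ appropriately, or to use one-dimensional radial fibres, so that parameter directions correspond bijectively to normal directions near $\nabla H(p_0)$. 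One also has to handle the fact that $\nu$ determines $p$ only through the proximal condition; this is fine because we only need some good parameter to exist.

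\textbf{Step 3: compute at a regular point and conclude.} At a regular point $x_1$, $u$ is analytic and classical, and $H_{a,b}\circ\psi_u$ has a local maximum at $x_1$. Writing $a=(a',\dots)$, this means
\[
(1+b^\beta)u^\beta+\sum_{\gamma\neq\beta}b^\gamma u^\gamma-\varphi+a\cdot x
\]
has a local maximum there. Multiplying the Hessian inequality by $g^{ij}$ and using \eqref{MSS2} for every component gives
\[
0\ge g^{ij}\bigl(-\varphi_{ij}\bigr)(x_1),
\]
since all terms involving $D^2u^\gamma$ vanish. Hence $g^{ij}\varphi_{ij}(x_1)\ge 0$. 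Because $\lambda_M I\le g^{-1}\le I$ when $|Du|\le M$, we get
\[
\mathcal M^+_{\lambda_M,1}(D^2\varphi(x_1))\ge g^{ij}\varphi_{ij}(x_1)\ge 0.
\]
This contradicts the negativity of $\mathcal M^+$ arranged in Step 1, since $x_1\in B_\rho$. (Strictly, $D^2\varphi$ should be perturbed by the $\eta$-term, but this is absorbed in the choice of $\rho$.) The supersolution inequality follows by the symmetric argument.
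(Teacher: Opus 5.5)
\textbf{Gap in Step 2.} Your Steps 1 and 3 are sound. Step 3 is just the barrier principle written out via \eqref{MSS2}, and using $\eta|x-x_0|^2$ instead of $|x-x_0|^4$ is harmless. The gap is in Step 2, at exactly the point you flag.

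The normal at a contact point is $\nu_{a,b}=(\nabla H(p_{a,b})+(a,b))/|\nabla H(p_{a,b})+(a,b)|$. Here $\nabla H(x,z)=(-\nabla\varphi(x),e_\beta)$ varies with the unknown contact point $p_{a,b}$. Your claim that radial changes of $(a,b)$ only change the length, while angular changes sweep out normal directions bijectively, treats $\nabla H$ as constant. That fails unless $\varphi$ is affine. Fixing $|(a,b)|$ or using radial fibres does not remove the dependence on $p$.

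More fundamentally, projecting $N(K)|_Z$ to the sphere throws away the information you need. A tilt is bad exactly when $(a,b)=t\nu-\nabla H(p)$ for some $(p,\nu)\in N(K)|_Z$ and $t>0$. Knowing only that the direction set $\mathcal D$ is $\mathcal H^{n+m-1}$-null and that $\nabla H(Z)$ is $\mathcal H^n$-null does not make $\{t\nu-w:\nu\in\mathcal D,\ w\in\nabla H(Z),\ t\in(0,C]\}$ null. For example, take the cone over a great $(n+m-2)$-sphere, which is a piece of a hyperplane. Translating it along a short curve transverse to that hyperplane (an $\mathcal H^n$-null set when $n\ge 2$) produces a set containing an open set. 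So the statement ``if the normal avoids $\mathcal D$, the contact point is regular'' is true, but it gives no guarantee that such parameters exist.

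\textbf{The missing idea.} Keep $p$ and $\nu$ coupled and use Theorem \ref{thm: santilli} at full strength: the set of pairs $E:=N(\mathscr G_u)|_Z$ is $\mathcal H^{n+m-1}$-null. Consider $\Psi(p,\nu,t)=t\nu-\nabla H(p)$ on $E\times[0,C]$, where $C$ bounds $|\nabla H_{a,b}|$ near $p_0$.
\begin{itemize}
\item $\Psi$ is Lipschitz there, since $\nabla H$ is $C^1$.
\item Its domain is $\mathcal H^{n+m}$-null, so $\Psi(E\times[0,C])$ is Lebesgue-null in $\mathbb R^{n+m}$.
\item By your own proximal-normal observation, this image contains every small tilt whose maximum lands in $Z$.
\end{itemize}
This is the paper's Lemma \ref{lem: tilts}. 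It shows that almost every small tilt has its contact point at a regular point, after which your Step 3 finishes the proof.
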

\begin{remark}
We note that, since the MSS is invariant under the transformation $u \mapsto -u$ and the Pucci operators are positively 1-homogeneous, it suffices to show that each $u^\beta$ cannot be touched from above  at a point $x_0 \in \Omega$ by $\varphi \in C^2$ satisfying
    \begin{equation}
        \mathcal{M}_{\lambda_M, 1}^+(D^2\varphi(x_0)) < 0. \label{eq: pucci<}
    \end{equation}
\end{remark}

Lemma \ref{lem: decoupling} is natural since the component functions $u^\beta$ of a classical solution $u \in C^2(\Omega;\mathbb{R}^m)$ solve $g^{ij}(x) u_{x_i x_j}^\beta(x) = 0$ in $\Omega$ in the viscosity sense with $\lambda_MI \leq g^{-1} \leq I$. In this context, we mean that if $\varphi \in C^2$ touches $u^\beta$ from above (below) at $x_0 \in \Omega$, then 
\[g^{ij}(x_0)\varphi_{x_i x_j}(x_0) \geq 0 \quad (\leq 0).\] 
It follows that $u^\beta$ satisfies \eqref{eq: homoPucci} at $x_0$. Since Theorem \ref{thm: 2d} and Theorem \ref{thm: partialreg} follow immediately from Lemma \ref{lem: decoupling} and the existing theory, we first briefly sketch their proofs. 
\begin{proof}[Proof of Theorem \ref{thm: 2d}]
    Each component of $u$ is $C^{1,\alpha}$ due to \cite[Theorem 12.4]{GT01} and \cite{Je96}, so $u$ is as well. Theorem \ref{Allard} applied after Taylor expansion near each point of $\Omega$ gives full interior regularity.
\end{proof}
The proof above derives regularity of 2D solutions directly from Lemma \ref{lem: decoupling} and the analyticity of 2D solutions to scalar equations. Since regularity is not guaranteed on higher dimensional domains, new techniques are needed to understand the general case. Nevertheless, we have quantitative partial regularity independent of $n$ and $m$.

\begin{proof}[Proof of Theorem \ref{thm: partialreg}]
    The theorem is proved in \cite[Theorem 4.12]{BD2} assuming Lemma \ref{lem: decoupling2} holds. Its proof is a variant of Armstrong, Silvestre, and Smart's partial regularity theorem for scalar equations proved in \cite{ASS12}; however, it uses Theorem \ref{Allard}. The estimate for $\varepsilon$ follows from the lower bound on $\varepsilon$ in the $W^{2,\varepsilon}$ estimate for fully nonlinear scalar equations \cite[Theorem 1]{NT23} (see \cite{Mo19} for the original $\varepsilon$ bound). 
\end{proof}

It is not hard to see that viscosity graphs are invariant under rigid motions, since the same transformation applied to the comparison surface yields a comparison surface for the transformed graph. Recall that $w(x) = L(x,u(x))$ for $L : \mathbb{R}^n \times \mathbb{R}^m \rightarrow \mathbb{R}$ given by
\[L(x,z) := a \cdot x + b \cdot z + c.\]
\begin{proof}[Proof of Proposition \ref{prop: harnack}]
   If $b = 0$, $w$ is affine and the conclusion is immediate. Suppose $b \neq 0$. If $R \in O(\mathbb{R}^{m})$ is a rotation, then $Ru: \Omega \rightarrow \mathbb{R}^{m}$ is a viscosity solution also. Taking the first row of $R$ to be $b/|b|$, we see that $(Ru)^1 = b/|b| \cdot u$ is a Lipschitz viscosity solution to a uniformly elliptic scalar PDE by Lemma \ref{lem: decoupling2}. Applying positive homogeneity of the scalar Pucci inequalities, we see that $b \cdot u$ satisfies the Pucci inequalities as well. Since $D^2 w = D^2(b\cdot u)$, this extends to $w$ so we may apply the scalar Harnack inequality to $w$ (e.g. see \cite[Chapter 4]{CC95}).
\end{proof}

Proposition \ref{prop: harnack} implies a strong maximum principle and half-space Liouville theorem for Lipschitz viscosity solutions. In the following corollaries, $w$ represents the function defined in Proposition \ref{prop: harnack}
\begin{corollary}[Strong Maximum Principle]\label{cor: max}
    Suppose $\Omega \subset \mathbb{R}^n$ is connected and $u \in \Lip(\Omega; \mathbb{R}^m) \cap C^0(\overline \Omega; \mathbb{R}^m)$ solves the MSS in $\Omega$ in the viscosity sense. If $w \geq 0$ on $\overline{\Omega}$ with $w(x_0) = 0$ for some $x_0 \in \Omega$, then $w \equiv 0$ in $\Omega$. Equivalently, if $\mathscr{G}_u$ touches a hyperplane from one side, it lies entirely in the hyperplane. 
\end{corollary}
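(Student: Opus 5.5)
The plan is to combine the Harnack inequality of Proposition \ref{prop: harnack} with a standard connectedness argument. Since $\Lip u$ is fixed, the Harnack constant $C = C(n,\Lip u)$ is uniform over all balls in $\Omega$. Let
\[
Z := \{x \in \Omega : w(x) = 0\}.
\]
By hypothesis $x_0 \in Z$, so $Z$ is nonempty. Since $w(x) = L(x,u(x))$ is continuous, $Z$ is relatively closed in $\Omega$. Because $\Omega$ is connected, it suffices to show that $Z$ is open; then $Z = \Omega$.

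To prove openness, fix $y \in Z$ and choose $r > 0$ with $B_{2r}^n(y) \subset \Omega$. On this ball $w \geq 0$, so Proposition \ref{prop: harnack}, applied to the ball centered at $y$ (its proof is translation invariant), gives
\[
\sup_{B_r^n(y)} w \leq C \inf_{B_r^n(y)} w \leq C\, w(y) = 0.
\]
Together with $w \geq 0$ this forces $w \equiv 0$ on $B_r^n(y)$, so $B_r^n(y) \subset Z$ and $Z$ is open.

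The geometric reformulation follows directly. Suppose $\mathscr{G}_u$ touches the hyperplane $\{L = 0\}$ from one side at an interior point. After replacing $L$ by $-L$ if necessary, this means $w \geq 0$ on $\overline{\Omega}$ and $w(x_0) = 0$ for some $x_0 \in \Omega$. The first part then gives $w \equiv 0$ in $\Omega$, and by continuity $w \equiv 0$ on $\overline{\Omega}$, so $\mathscr{G}_u \subset \{L = 0\}$. Conversely, if the graph lies in the hyperplane, it trivially touches it from one side, which gives the stated equivalence.

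The only point that needs care is that the Harnack inequality is used on balls $B_{2r}^n(y)$ centered at arbitrary points $y$, not just at the origin. This is harmless: the proof of Proposition \ref{prop: harnack} only uses the Pucci inequalities for $w$, which are local and translation invariant, so the scalar Harnack inequality of \cite[Chapter 4]{CC95} applies on any ball compactly contained in $\Omega$.
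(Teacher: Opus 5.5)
Your proof is correct and is essentially the paper's argument: the Harnack inequality on $B_{2r}^n(y)\subset\Omega$ at a zero $y$ of $w$ forces $w\equiv 0$ on $B_r^n(y)$. Your open--closed argument for $Z=\{x\in\Omega : w(x)=0\}$ is exactly the ``standard covering argument'' the paper invokes to spread this vanishing across the connected domain $\Omega$.
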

\begin{proof}
    Suppose $w(x_0) = 0$. The Harnack inequality applied in $B_{2r}^n(x_0) \subset \Omega$ implies
    \[\sup_{B_r^n(x_0)} w \leq C \inf_{B_r^n(x_0)} w = 0.\]
    That is, $w \equiv 0$ on $B_r^n(x_0)$. A standard covering argument implies $w \equiv 0$ in $\Omega$.
\end{proof}

\begin{corollary}[Half-space Liouville Theorem]\label{cor: liouville}
    Suppose $u \in \Lip(\mathbb{R}^n; \mathbb{R}^m)$ is an entire viscosity solution to the MSS on $\mathbb{R}^n$. If $\inf_{\mathbb{R}^n} w(x) > - \infty$, then $w$ is constant. Equivalently, if $\mathscr{G}_u$ lies in a closed half-space, then $\mathscr{G}_u$ lies in a hyperplane parallel to the boundary hyperplane of the half-space.
\end{corollary}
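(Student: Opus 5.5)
Since $w = L(x,u(x))$ is bounded below on all of $\mathbb{R}^n$, I will reduce to the nonnegative case and use the Harnack inequality (Proposition \ref{prop: harnack}) on larger and larger balls, as in the classical Liouville theorem for nonnegative harmonic functions. Set $c_0 := \inf_{\mathbb{R}^n} w > -\infty$ and $v := w - c_0$. Then $v$ has the same form: it equals $\tilde L(x,u(x))$ with $\tilde L(x,z) := a\cdot x + b\cdot z + (c - c_0)$, and $v \geq 0$ on $\mathbb{R}^n$. So Proposition \ref{prop: harnack} applies to $v$ on every ball $B_{2r}^n(x_0)$, with a constant $C = C(n, \Lip u)$ that does not depend on $r$ or $x_0$.

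Fix $\eta > 0$ and choose a point $y$ with $v(y) < \eta$, which exists by the definition of the infimum. Take any $x \in \mathbb{R}^n$ and any $r > |x-y|$, so that both $x$ and $y$ lie in $B_r^n(y)$. The Harnack inequality on $B_{2r}^n(y)$ gives
\[v(x) \leq \sup_{B_r^n(y)} v \leq C \inf_{B_r^n(y)} v \leq C\, v(y) < C\eta.\]
Since $\eta$ is arbitrary and $C$ is independent of the ball, $v(x) = 0$. Thus $v \equiv 0$, so $w \equiv c_0$ is constant.

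The step that needs the most care is the uniformity of $C$ in $r$. Proposition \ref{prop: harnack} is stated with $C = C(n,\Lip u)$. This matches the scale invariance of the scalar Harnack inequality for the Pucci class: rescaling $v_r(x) := v(rx)$ preserves the homogeneous Pucci inequalities with the same $\lambda_M$ and $\Lambda$, because the Lipschitz constant of $u$ and the ellipticity constants from Lemma \ref{lem: decoupling2} are global. I will note this explicitly.

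For the geometric restatement, suppose $\mathscr{G}_u$ lies in a closed half-space $\{L \geq 0\}$. Then $w \geq 0$, so by the above $w$ is a constant $k \geq 0$. Hence $\mathscr{G}_u \subset \{L = k\}$, which is a hyperplane parallel to the boundary hyperplane $\{L = 0\}$. If $b = 0$, then $w$ is affine and bounded below, so $a = 0$ and the claim is trivial.
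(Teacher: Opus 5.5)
Your proof is correct and follows essentially the same route as the paper: normalize the infimum of $w$ to zero, then apply the Harnack inequality on balls of radius $r\to\infty$, whose constant $C(n,\Lip u)$ does not depend on $r$. The only cosmetic difference is that you center the balls at a near-minimizer $y$ and use an $\eta$-argument, whereas the paper fixes the center $x_0$ and uses that $\inf_{B_r^n(x_0)} w \to 0$ while $\sup_{B_r^n(x_0)} w$ is nondecreasing in $r$.
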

\begin{proof}
   Without loss of generality, we may assume $\inf_{\mathbb{R}^n} w(x) = 0$. Fix $x_0 \in \mathbb{R}^n$. Then the Harnack inequality says that for any $r > 0$
   \[0 \leq \sup_{B_r^n(x_0)} w \leq C(n,M) \inf_{B_r^n(x_0)} w \rightarrow 0 \text{ as } r \rightarrow +\infty.\]
   Since the supremum in the middle term is increasing in $r$, we have $w\equiv 0$.
\end{proof}

We are ready to prove the Decoupling Lemma.
\subsection{Proof of Lemma \ref{lem: decoupling}}
First, we show that if $H \in C^2(\mathbb{R}^{n+m})$ and an affine perturbation of $H$ has a local maximum at an interior point $p_0 \in \mathscr{G}_u$, we can extract a proximal unit normal. 
\begin{lemma}\label{lem:proximal}
Suppose $H\in C^2(\R^{n+m})$ and $a\in\mathbb{R}^{n+m}$.
If $p_0 \in \Int \mathscr{G}_u$ maximizes $H_a(p):=H(p)+a\cdot p$ over $\mathscr{G}_u$ and
$\nabla H_a(p_0) \ne0$, then
\[
 \left(p_0,\nu_a \right)\in N(\mathscr{G}_u)
\text{ for } \nu_a := \nabla H_a(p_0)/|\nabla H_a(p_0)|.\]
\end{lemma}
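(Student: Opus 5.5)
The plan is to build the required ball explicitly from a second-order Taylor bound for $H_a$ at $p_0$. Set $g := |\nabla H_a(p_0)| > 0$. The key observation is that $H_a$ grows strictly away from $p_0$ in the direction $\nu_a$ of its gradient. So a sufficiently small ball tangent to the level set $\{H_a = H_a(p_0)\}$ at $p_0$, and lying on the side into which $\nu_a$ points, is contained in $\{H_a > H_a(p_0)\}\cup\{p_0\}$. Since $\mathscr{G}_u$ lies in $\{H_a \le H_a(p_0)\}$, at least near $p_0$, such a ball meets $\mathscr{G}_u$ only at $p_0$. By the definition of $N(\mathscr{G}_u)$, this gives $(p_0,\nu_a)\in N(\mathscr{G}_u)$.

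\emph{Taylor bound.} Since $H_a = H + a\cdot p$ differs from $H$ by an affine function, $D^2H_a = D^2H$. Set $C := \sup_{B_1^{n+m}(p_0)}\|D^2H\|$, which is finite because $H\in C^2$. Taylor's theorem with the mean-value remainder then gives
\[
H_a(p) \ge H_a(p_0) + g\,\nu_a\cdot(p-p_0) - \tfrac{C}{2}|p-p_0|^2 \quad\text{for } p\in B_1^{n+m}(p_0).
\]
If $p_0$ is only assumed to be a local maximum, I also fix $r\in(0,1]$ so small that $H_a \le H_a(p_0)$ on $\mathscr{G}_u\cap B_r^{n+m}(p_0)$.

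\emph{Ball inclusion.} Take $p$ in the closed ball $\overline{B_t^{n+m}}(p_0+t\nu_a)$. Expanding $|p-p_0-t\nu_a|^2\le t^2$ gives $|p-p_0|^2 \le 2t\,\nu_a\cdot(p-p_0)$. Substituting into the Taylor bound yields
\[
H_a(p)-H_a(p_0) \ge \Big(\frac{g}{2t}-\frac{C}{2}\Big)|p-p_0|^2 .
\]
Choose $t>0$ with $t< g/C$ (any $t$ works if $C=0$) and $2t<r$. Then the whole closed ball lies in $B_r^{n+m}(p_0)$, and $H_a(p)>H_a(p_0)$ for every $p\neq p_0$ in it. Combined with $H_a\le H_a(p_0)$ on $\mathscr{G}_u\cap B_r^{n+m}(p_0)$, this shows $\overline{B_t^{n+m}}(p_0+t\nu_a)\cap\mathscr{G}_u=\{p_0\}$. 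Hence $\dist(p_0+t\nu_a,\mathscr{G}_u)=t$, with the distance attained at $p_0$.

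\emph{Closedness.} $\mathscr{G}_u$ is the graph of a continuous map over the compact set $\overline\Omega$, so it is closed and coincides with $K$ in Theorem \ref{thm: santilli}. Thus the pair lies in $N(K)|_{\mathscr{G}_u}$, which is the form needed later.

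The only real subtlety is the ball-inclusion step: converting the inequality $|p-p_0|^2\le 2t\,\nu_a\cdot(p-p_0)$ into strict positivity of $H_a-H_a(p_0)$ while keeping the ball inside the region where both the Taylor bound and the local-maximum property hold. This is the geometric content of "bounded curvature of the level set permits an interior tangent ball." It is handled by the two smallness conditions on $t$, and I do not expect a genuine obstacle.
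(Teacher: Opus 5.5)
Your proof is correct and is essentially the paper's argument: the same second-order Taylor bound for $H_a$ at $p_0$, and the same ball of radius $t$ centered at $p_0+t\nu_a$, with $t$ small compared to $|\nabla H_a(p_0)|/C$ and to the localization radius $r$. The only difference is cosmetic: you show $H_a>H_a(p_0)$ on the closed ball minus $p_0$, whereas the paper shows directly that nearby graph points satisfy $|p-(p_0+t\nu_a)|^2\ge (1-Ct/|\nabla H_a(p_0)|)|p-p_0|^2+t^2$, which is the same inequality read from the other side.
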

\begin{proof}
    Since $p_0$ is an interior point, there is an $r>0$ such that $B_r := B_r^{n+m}(p_0) \cap \mathscr{G}_u \subset \subset \Int \mathscr{G}_u$. Fix $p \in B_r$ and set $C := \|D^2 H\|_{L^\infty(\overline{B_r^n(p_0)})}$. Taylor's Theorem applied to $H_a$ along the segment joining $p_0$ to $p$ in $B_r^{n+m}(p_0)$ gives 
    \[0 \geq \nabla H_a(p_0) \cdot (p-p_0) - \frac{C}{2}|p-p_0|^2.\]
    If $t < \min\{|\nabla H_a(p_0)|/2C, r/2\}$, then
    \begin{align*}
        |p - (p_0 + t\nu_a)|^2 &= |p -p_0|^2 -2t\nu_a \cdot (p-p_0) + t^2 \\
        &\geq \Big(1 - \frac{Ct}{|\nabla H_a(p_0)|}\Big)|p-p_0|^2 +t^2  \\
        &\geq t^2.
    \end{align*} 
    It follows that the closed $(n+m)$-ball of radius $t$ centered at $p_0 + t\nu_a$ intersects $\mathscr{G}_u$ only at $p_0$.
\end{proof}

For the remainder of this section, the map $H_a : \mathbb{R}^{n+m} \rightarrow \mathbb{R}$ will denote the map defined in Lemma \ref{lem:proximal}. The next lemma allows us to shift contact to regular points by tilting an initial hypersurface determined by a level set of $H$ (i.e. $H \mapsto H_a$). In fact, we will show that generic tiltings of a barrier hypersurface make contact with $\mathscr{G}_u$ at regular points.

\begin{lemma}\label{lem: tilts}
Suppose $Z\subset \Int \mathscr{G}_u$ is relatively closed and that
$\mathcal{H}^{n+m-1}(N(\mathscr{G}_u)|_Z)=0$. Let $H\in C^2(\mathbb{R}^{n+m})$ and $\rho>0$. Then for Lebesgue a.e. $a \in B_\rho^{n+m}(0)$, every maximum $p_0 \in \Int \mathscr{G}_u$ of $H_a$ satisfying $\nabla H_a(p_0) \neq0$ lies in $\Int \mathscr{G}_u \setminus Z$. 
\end{lemma}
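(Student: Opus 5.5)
The plan is to show that the set of bad tilts is contained in the image of a null set under a locally Lipschitz map. Let $E \subset B_\rho^{n+m}(0)$ be the set of $a$ for which some maximum $p_0 \in Z$ of $H_a$ over $\mathscr{G}_u$ has $\nabla H_a(p_0) \neq 0$. By Lemma \ref{lem:proximal}, each such pair $(a,p_0)$ gives $(p_0,\nu_a) \in N(\mathscr{G}_u)|_Z$, where $\nu_a = (\nabla H(p_0)+a)/|\nabla H(p_0)+a|$. Conversely, $a$ is recovered from $p_0$, $\nu_a$ and the scalar $s := |\nabla H(p_0)+a| > 0$ via
\[
a = s\nu - \nabla H(p_0).
\]
So consider the map
\[
\Phi : N(\mathscr{G}_u)|_Z \times (0,\infty) \to \mathbb{R}^{n+m}, \qquad \Phi(p,\nu,s) := s\nu - \nabla H(p).
\]
Then $E \subset \Phi\bigl(N(\mathscr{G}_u)|_Z \times (0, s_0)\bigr)$, with $s_0 := \rho + \sup|\nabla H|$ taken over the relevant compact region. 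Since $H$ is only $C^2$, $\nabla H$ is $C^1$ and hence locally Lipschitz, so $\Phi$ is locally Lipschitz on $\mathbb{R}^{n+m} \times \mathbb{R}^{n+m} \times \mathbb{R}$.

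The key measure estimate is that $N(\mathscr{G}_u)|_Z \times (0,s_0)$ has $\mathcal{H}^{n+m}$-measure zero. The product of an $\mathcal{H}^{n+m-1}$-null set with an interval is $\mathcal{H}^{n+m}$-null, by the standard product inequality $\mathcal{H}^{k+1}(A \times I) \le C\,\mathcal{H}^k(A)\,\mathcal{L}^1(I)$ (Federer 2.10.45). Locally Lipschitz maps send $\mathcal{H}^{n+m}$-null sets to $\mathcal{H}^{n+m}$-null sets, and $\mathcal{H}^{n+m} = \mathcal{L}^{n+m}$ on $\mathbb{R}^{n+m}$. Hence $\mathcal{L}^{n+m}(E) = 0$.

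To handle local versus global Lipschitz constants, exhaust by countably many compact pieces: write $Z$ as a countable union of sets $Z \cap \overline{B_k}$ and $(0,s_0)$ as a union of intervals $[1/j, s_0)$, and use countable subadditivity. Relative closedness of $Z$ is used only to ensure the relevant sets are Borel/measurable. The conclusion does not actually need measurability of $E$, since $E$ sits inside a null set and Lebesgue measure is complete.

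The main obstacle is the measure-theoretic step that the product of an $\mathcal{H}^{n+m-1}$-null subset of $\mathbb{R}^{n+m}\times\mathbb{S}^{n+m-1}$ with an interval is $\mathcal{H}^{n+m}$-null. I would justify it directly by covering. Cover $N(\mathscr{G}_u)|_Z$ by sets of diameters $d_i$ with $\sum d_i^{n+m-1} < \eta$, and subdivide the interval $(0,s_0)$ into about $s_0/d_i$ pieces of length $d_i$. This yields a cover of the product with $\sum_i (s_0/d_i)\, d_i^{n+m} \lesssim s_0\eta$, which tends to zero as $\eta \to 0$.

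A minor point is that the maxima in the statement are over $\mathscr{G}_u$ (possibly only local). Lemma \ref{lem:proximal} needs only the local maximality near interior points, so it applies to each such $p_0$ and yields the required proximal normal.
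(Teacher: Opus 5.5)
Your proposal is correct and is essentially the paper's proof: both write each bad tilt as $a = t\nu - \nabla H(p_0)$ with $(p_0,\nu)\in N(\mathscr{G}_u)|_Z$ and $t$ in a bounded interval, and use the Lipschitz map $(p,\nu,t)\mapsto t\nu-\nabla H(p)$ to send the $\mathcal{H}^{n+m}$-null set $N(\mathscr{G}_u)|_Z\times[0,C_\rho]$ to a Lebesgue-null set. Your extra care fills in details the paper states only briefly: the local-Lipschitz exhaustion, bounding the Lipschitz constant of $\nabla H$ by $D^2H$ rather than by $\nabla H$ itself, the covering proof of the product estimate, and the remark that Lemma \ref{lem:proximal} only needs local maximality.
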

\begin{proof}
    Define
\[
 C_\rho :=1+\sup_{p\in \mathscr{G}_u}|\nabla H(p)|+\rho.
\]
If $a \in B_\rho^{n+m}(0)$ admits a maximum at $p_0\in Z$ with $\nabla H_a(p_0) \neq 0$,
Lemma~\ref{lem:proximal} gives $(p_0,\nu_a)\in N(\mathscr{G}_u)\lvert_Z$ where $\nu_a$ is defined from $H_a$ as before. Set $t_a := |\nabla H_a(p_0)|$. Then $t_a \in (0,C_\rho)$ and 
\begin{equation}
 a=t_a\nu_a-\nabla H(p_0). \label{eq: a}
\end{equation}

Set $E:= N(\mathscr{G}_u)\lvert_Z$ and consider the map
\[
 \Psi:E \times [0,C_\rho]\longrightarrow\R^{n+m},
 \qquad \Psi(p,\nu,t)=t\nu-\nabla H(p).
\]
Since $\nabla H$ is $C^1$, we may set $C_0 := \|\nabla H\|_{L^\infty(\mathscr{G}_u)} < \infty$. Then for $(p,\nu,t),(q,\mu,s) \in E$, we have 
\[|\Psi(p,\nu,t) - \Psi(q,\mu,s)| \leq C_\rho|\nu - \mu| + |t-s| + C_0|p-q| \leq C_1|(p,\nu,t) - (q,\mu,s)|.\]
That is, $\Psi$ is Lipschitz on $E$. Since $\mathcal{H}^{n+m-1}(E) = 0$, we have $\mathcal{H}^{n+m}(E \times [0,C_\rho]) = 0$ also. Using that $\Psi$ is Lipschitz, we conclude
\[\mathcal{L}^{n+m}(\Psi(E \times [0,C_\rho])) = 0\]
where $\mathcal{L}^{n+m}$ is the Lebesgue measure on $\mathbb{R}^{n+m}$. Since this image contains those $a \in B_\rho^{n+m}(0)$ admitting a maximum $p_0 \in Z$ such that $\nabla H_a(p_0) \neq 0$ by \eqref{eq: a}, the set of such $a$ is $\mathcal{L}^{n+m}$-null.
\end{proof}

Suppose now that $u^\beta$ is touched from above at $x_0 \in \Omega$ by $\varphi \in C^2(\Omega)$ satisfying \eqref{eq: pucci<} with $\Lambda = 1$ and $\lambda = \lambda_M$. Without loss of generality, we may assume $\beta = 1$. After localizing and replacing $\varphi$ by $\varphi(x) + |x-x_0|^4$ (which we do not relabel), we may assume without loss of generality that $\varphi \geq u^1 + |x-x_0|^4$ in $\Omega \setminus \{x_0\}$. Define $H \in C^2(\Omega \times \mathbb{R}^m)$ by $H(x,z) := z^1 - \varphi(x)$. Let $\psi_u$ be the graph map for $u$. Then $H \lvert_{\mathscr{G}_u} = H \circ \psi_u \leq 0$ in $\Omega$ with equality at $x_0$. We split the remainder of the proof into three steps.

\medskip
\noindent \emph{Step 1: Small tiltings have nearby maxima.} For $a \in B_\rho^{n+m}(0)$, set
\[H_a(x,z) := H(x,z) + a \cdot (x,z), \qquad a := (a^x, a^z) \in \mathbb{R}^n\times \mathbb{R}^m\]
and let $Z := \psi_u(\sing(u))$. Choose $0 < r < 1$ small enough that $B_r^n(x_0) \subset \subset \Omega$ and set $h_a := H_a \lvert_{\mathscr{G}_u}$. Note that
\[h_a(x) = (H \circ \psi_u)(x) + a \cdot \psi_u(x)\]
implying that for $x$ satisfying $|x-x_0| = r$ we have 
\[h_a(x) - h_a(x_0) \leq -r^4+|a|\sqrt{1+M^2}r.\]
Taking $\rho$ small depending on $M$ and $r$ gives 
\[h_a(x) - h_a(x_0) \leq -r^4/2.\]
Continuity of $h_a$ and compactness of $\overline{B_r^n(x_0)}$ imply that $h_a$ has a maximum on $\overline{B_r^n(x_0)}$, while the previous estimate says this maximum must be achieved at an interior point. Equivalently, $H_a$ achieves its maximum on $\mathscr{G}_u$ in $\psi_u(B_r^n(x_0))$.

\medskip
\noindent \emph{Step 2: Generic tiltings achieve their maximum at regular points.} 
Let $\{e_\beta\}_{\beta = 1}^m$ denote the standard basis for $\mathbb{R}^m$. At $(x,z) \in \mathbb{R}^n \times \mathbb{R}^m$ we have 
\[\nabla H_a(x,z) = (-\nabla_x \varphi + a^x, e_1 + a^z) \neq 0\]
since $\rho < r < 1$ and $a \in B_\rho^{n+m}(0)$. Now, $\mathcal{H}^n(Z) = 0$ so that $\mathcal{H}^{n+m-1}(N(\mathscr{G}_u)\lvert_Z) = 0$ by Theorem \ref{thm: santilli}. Lemma \ref{lem: tilts} says that the tiltings $a$ for which $h_a$ has a maximum in $Z$ form a Lebesgue null set $\mathcal{N}$. Choose $a \in B_\rho^{n+m}(0) \setminus \mathcal{N}$ and let $x_a$ be a maximum point for $h_a$. The previous step shows that $x_a \in \Int \mathscr{G}_u \setminus Z$. 

\medskip
\noindent\emph{Step 3: Contradict barrier principle.} Let $p_a := (x_a,u(x_a))$. By construction, $H_a|_{\mathscr{G}_u}$ has a local maximum at $p_a$. Since $x_a$ is a regular point, $\mathscr{G}_u$ is a smooth minimal submanifold in a neighborhood of $p_a$, and
\[
\nabla H_a(p_a)\perp T_{p_a}\mathscr{G}_u.
\]
Set $T_a := T_{p_a}\mathscr{G}_u$. Then the barrier principle for smooth minimal submanifolds implies
\[
    \tr_{T_a}D^2H_a(p_a) = -\tr(g^{-1}(x_a)D^2\varphi(x_a)) \leq 0.
\]
Since $\lambda_M I\leq g^{-1}(x_a)\leq I$, the definition of the maximal Pucci operator gives
\[
    -\tr(g^{-1}(x_a)D^2\varphi(x_a))
\geq
-\mathcal M_{\lambda_M,1}^+(D^2\varphi(x_a))
>0,
\]
contradicting the barrier principle. \hfill $\square$

\subsection{Boundary regularity}
When paired with the regularity theory for scalar viscosity solutions, Lemma \ref{lem: decoupling} leads to a proof of Theorem \ref{thm: boundary}. This is because sufficient regularity at the boundary of a Lipschitz viscosity graph implies quantitative flatness at $\partial \Omega$ due to \cite[Theorem 1.1]{SS14}, allowing us to apply Savin's Allard Theorem at interior points near the boundary. A standard two point reduction gives $C^{1,\beta}$ regularity up to the boundary for some $\beta \in (0, \alpha)$ when the boundary data of the solution are $C^{1,\alpha}$ for some $\alpha \in (0,1)$. 

Let $\Omega \subset \mathbb{R}^n$ be a bounded $C^2$ domain. Recall that, for $\delta > 0$, we set  
\[\Omega_\delta := \{x \in \Omega : \dist(x, \partial \Omega) < \delta\}.\]
\begin{proof}[Proof of Theorem \ref{thm: boundary}]
The boundary regularity theorem of Silvestre--Sirakov \cite[Theorem 1.1]{SS14} shows that there are constants $\beta \in (0,\alpha)$ and $r > 0$ and a map $A \in C^\beta(\partial \Omega; \mathbb{R}^{m\times n})$ such that, if $p \in \partial \Omega$ is fixed and we define
\[\ell_p(y) := u(p) + A(p)(y-p),\]
then
\begin{equation}\label{est2}
    |u(y) - \ell_p(y)| \leq C|y-p|^{1 + \beta} \text{ for all } y \in \Omega_r.
\end{equation}
Furthermore, 
\begin{equation}\label{est3}
    |A(p) - A(q)| \leq C|p-q|^\beta \text{ for all } p,q \in \partial \Omega,
\end{equation}
where the constant $C$ is uniform due to the compactness of $\partial \Omega$.

Fix $x \in \Omega_r$. Set $d:= \dist(x,\partial \Omega)$ and set $\rho := d/2$. Choose $p \in \partial \Omega$ such that $|x-p| = d$. Then $B_\rho^n(x) \subset \Omega$. The triangle inequality shows that for each $y \in B_\rho^n(x)$ we have $|y-p| \leq 3\rho$. If $3\rho < r$, then 
\begin{equation}
    \|u - \ell\|_{L^\infty(B_\rho^n(x))} \leq C\rho^{1 + \beta}
\end{equation}
by \eqref{est2}. We now re-scale to the unit ball: for $z \in B_1^n(0)$, we define
\begin{align*}
    v(z) &:= v_{x,\rho}(z) = \rho^{-1}(u(x + \rho z) - u(x)) \\
    L(z) &:= L_{x,\rho}(x) = \rho^{-1}(\ell(x + \rho z) - u(x)).
\end{align*}
Notice that $DL(z) = A(p)$ is uniformly bounded. Then 
\begin{equation*}
    \|v - L\|_{L^\infty(B_1^n(0))} \leq C \rho^\beta.
\end{equation*}
Choose $\delta > 0$ so small that $d < \delta$ implies $3\rho < r$ and $C\rho^\beta < \varepsilon_\ast$, where $\varepsilon_\ast$ is the constant in Theorem \ref{Allard}. Thus, the theorem implies that $v$ is analytic in $B_{1/2}^n(0)$ and 
\begin{equation*}
    \|D^2 v\|_{L^\infty(B_{1/2}^n(0))} \leq C \rho^\beta.
\end{equation*}
Now, by a quantitative improvement of flatness estimate we have 
\begin{equation*}
    |Dv(0) - A(p)| \leq C \rho^\beta,
\end{equation*}
where $Dv(0) = Du(x)$ and $D^2v(z) = \rho D^2 u (x + \rho z)$. We have therefore shown
\begin{equation}\label{est4}
    |Du(x) - A(p)| \leq Cd^\beta, \qquad \|D^2u\|_{L^\infty(B_{d/4}^n(x))} \leq C d^{\beta-1}.
\end{equation}

The estimates derived above allow us to show that $A$ is the continuous extension of $Du$ on $\partial \Omega$. Indeed, if $q \in \partial \Omega$ and $p$ is a nearest boundary point to $x$, then
\[d \leq |x-q|, \qquad |p-q| \leq 2|x-q|.\]
Using \eqref{est3}, \eqref{est4}, the triangle inequality, and the above inequalities, we deduce 
\[|Du(x) - A(q)| \leq |x-q|^\beta.\]
Thus, $Du$ extends H\"older continuously to $\partial \Omega$ with exponent $\beta$. The estimate \eqref{est2} identifies $A(q)$ with the derivative of $u$ at $q$ relative to $\partial \Omega$.

To conclude the proof, we need to obtain uniform H\"older estimates for $Du$ in $\Omega_\delta$. Fix $x,z \in \Omega_\delta$ and set
\[h := |x-z|, \qquad d_x := d(x,\partial \Omega), \qquad d_z := d(z, \partial \Omega).\]
First, we suppose $h < d_x/4$. Then the line segment joining $x$ and $z$ lies entirely in $B_{d_x/4}^n(x)$. Using the Hessian bound in \eqref{est4}, we deduce
\begin{equation}\label{est5}
    |Du(x) - Du(z)| \leq Cd_x^{\beta-1} h = Ch^\beta\big(h/d_x\big)^{1-\beta} \leq Ch^\beta.
\end{equation}
This proves the desired estimate for $h < d_x/4$. Next, suppose $h \geq d_x/4$. Then $d_z \leq 5h$ by the triangle inequality. Let $p_x, p_z$ be nearest points on $\partial \Omega$ to $x$ and $z$, respectively. We have $|p_x - p_z| \leq 10h$ by the triangle inequality. Thus, \eqref{est3} and \eqref{est4} imply
\begin{align}
    |Du(x) - Du(z)| &\leq | Du(x) - A(p_x)| + |A(p_x) - A(p_z)| + |A(p_z) - |Du(z)| \nonumber \\
    &\leq C\big(d_x^\beta + |p_x - p_z|^\beta + d_z^\beta \big) \nonumber \\
    &\leq Ch^\beta. \label{est6}
\end{align}
Estimates \eqref{est5} and \eqref{est6} complete the proof. The constants $\beta$ and $\delta$ inherit their dependencies from our use of \cite[Theorem 1.1]{SS14} in the former case and Theorem \ref{Allard} in the latter.
\end{proof}
\begin{remark}
    Notice that Theorem \ref{thm: boundary} holds for stationary solutions to the MSS. Furthermore, we do not require $\partial \Omega$ to be strictly convex since our proof does not appeal to the theory for stationary varifolds (compare with \cite[Theorem 2.3]{LO1}).
\end{remark}

Theorem \ref{thm: boundary} along with a simple bootstrapping argument yields the following corollary. We omit the details, since it follows from boundary Schauder estimates applied component-wise to the classical MSS.
\begin{corollary}\label{cor: 2alpha}
      Assume $\Omega \subset \mathbb{R}^n$ is a bounded domain with $C^{2,\alpha}$ boundary. Suppose $u \in \Lip(\Omega; \mathbb{R}^m) \cap C^0(\overline{\Omega};\mathbb{R}^m)$ solves the MSS in $\Omega$ in the viscosity sense, and that $u \lvert_{\partial \Omega} = \phi \in C^{2,\alpha}(\partial \Omega; \mathbb{R}^m)$ for some $\alpha \in (0,1)$. Then there is a constant $\delta > 0$ such that $u \in C^{2,\alpha}(\overline{\Omega_{\delta}}; \mathbb{R}^m)$, where $\delta := \delta(n,m, \Lip(u), \alpha,  \partial \Omega, \|\phi\|_{C^{2,\alpha}(\partial \Omega)})$.
\end{corollary}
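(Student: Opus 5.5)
The plan is to bootstrap from Theorem \ref{thm: boundary}. Since a $C^{2,\alpha}$ domain is in particular $C^2$ and $\phi \in C^{2,\alpha} \subset C^{1,\alpha}$, Theorem \ref{thm: boundary} gives $\delta_0>0$ and $\beta\in(0,\alpha)$ with $u \in C^{1,\beta}(\overline{\Omega_{\delta_0}};\mathbb{R}^m)$. The proof of Theorem \ref{thm: boundary} also shows, through Theorem \ref{Allard} and Remark \ref{rmk:analytic}, that $u$ is analytic in $\Omega_{\delta_0}$ and is a classical solution of \eqref{MSS2} there. The idea is then to view \eqref{MSS2} as $m$ decoupled linear equations
\[
a^{ij}(x)\,u^\beta_{x_ix_j} = 0, \qquad a^{ij} := g^{ij} = \big((I + Du^TDu)^{-1}\big)^{ij},
\]
with coefficients frozen at the known solution. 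By Lemma \ref{lem: decoupling2} (or simply because $\Lip u = M$), these coefficients satisfy $\lambda_M I \le (a^{ij}) \le I$. Since $Du \in C^{\beta}$ up to $\partial\Omega$ near the boundary and $g^{-1}$ is a smooth function of $Du$, we get $a^{ij} \in C^{\beta}(\overline{\Omega_{\delta_0}})$, with bounds depending only on $M$ and the $C^{1,\beta}$ norm from Theorem \ref{thm: boundary}.

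Next I would apply the global (boundary) Schauder estimate, e.g. \cite[Theorem 6.6]{GT01}, to each $u^\beta$ separately in a $C^{2,\alpha}$ collar domain $D \subset \Omega_{\delta_0}$. I would take $D$ to contain $\Omega_{\delta_0/2}$ and share the boundary portion $\partial\Omega$. The data are $u^\beta = \phi^\beta$ on $\partial\Omega$, and the inner boundary of $D$ lies at distance at least $\delta_0/2$ from $\partial \Omega$, where $u$ is interior-smooth. The first pass uses $C^\beta$ coefficients and $C^{2,\alpha}$ data, so it gives $u \in C^{2,\beta}$ near $\partial\Omega$. Consequently $Du \in C^{1,\beta} \subset C^{0,\alpha}$, so $a^{ij}\in C^{\alpha}$. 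A second application of the Schauder estimate with $C^\alpha$ coefficients and $C^{2,\alpha}$ boundary data yields $u^\beta \in C^{2,\alpha}(\overline{\Omega_\delta})$ for some $\delta < \delta_0$. The dependence of $\delta$ is inherited from Theorem \ref{thm: boundary}, plus the $C^{2,\alpha}$ norms of $\partial\Omega$ and $\phi$.

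The main obstacle is the localization. Schauder theory is most cleanly stated either in the whole domain with a solution already in $C^{2}(\Omega)\cap C^0(\overline\Omega)$, or in half-balls with flat boundary. Here we only know regularity in the collar, and the first pass must begin with a solution that is merely $C^{1,\beta}$ up to the boundary, not a priori $C^{2,\alpha}$ up to it.

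I would handle this as follows. First solve, by \cite[Theorem 6.14]{GT01}, the linear Dirichlet problem in $D$ with coefficients $a^{ij}$ and boundary values $u^\beta$. These values are $C^{2,\alpha}$ on $\partial\Omega$ and smooth on the inner boundary, after smoothing the inner boundary of $D$ if necessary. I would then use uniqueness, via the classical maximum principle for $C^2(D)\cap C^0(\overline D)$ functions, to identify the solution with $u^\beta$. If the coefficients are only $C^\beta$ in the first pass, the same argument gives $C^{2,\beta}$ regularity, and the second pass upgrades to $C^{2,\alpha}$.
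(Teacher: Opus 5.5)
Your proposal is correct and follows the route the paper indicates but does not write out. You bootstrap from the $C^{1,\beta}$ conclusion of Theorem \ref{thm: boundary} by applying boundary Schauder theory component-wise to the classical MSS with the coefficients $g^{ij}(Du)$ frozen, getting first $C^{2,\beta}$ and then $C^{2,\alpha}$. Your collar localization (solve the linear Dirichlet problem on the collar, then identify the solution with $u^\beta$ by the maximum principle) is the standard local boundary-regularity argument of \cite[Chapter 6]{GT01}, and it validly supplies the details the paper leaves out.
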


Theorem \ref{thm: boundary} allows us to characterize $\sing(u)$ for Lipschitz stationary solutions on a $C^2$ bounded domain in $\mathbb{R}^4$ with $C^{1,\alpha}$ boundary data. We say a cone $\mathcal{C}$ is \emph{regular} if it is $C^\infty$ away from its vertex.
\begin{proposition}\label{prop: sing(u)}
    Suppose $\Omega \subset \mathbb{R}^4$ is a bounded domain with $C^2$ boundary. If $u \in \Lip(\Omega; \mathbb{R}^m) \cap C^0(\overline \Omega; \mathbb{R}^m)$ is a stationary solution to the MSS in $\Omega$ such that $u \lvert_{\partial \Omega} = \phi \in C^{1,\alpha}(\partial\Omega; \mathbb{R}^m)$ for some $\alpha \in (0,1)$, then:
    \begin{itemize}
        \item[(i)] The interior singular set $\sing(u)$ is a finite subset of $\Omega$. 
        \item[(ii)] At each singularity, the tangent cone to $\mathscr{G}_u$ at the singular point is a multiplicity one regular cone and, up to rotation, is graphical. Hence, tangent cones are unique.
    \end{itemize}
\end{proposition}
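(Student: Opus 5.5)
By Theorem \ref{thm: boundary}, there is $\delta>0$ such that $u \in C^{1,\beta}(\overline{\Omega_\delta};\mathbb{R}^m)$. In particular $u$ is $C^1$ on the collar $\Omega_\delta$, so Theorem \ref{Allard} applies after a Taylor expansion at each point of $\Omega_\delta$. Hence $\sing(u) \cap \Omega_\delta = \emptyset$, and $\sing(u)$ is a relatively closed subset of the compact set $K := \{x \in \Omega : \dist(x,\partial\Omega) \geq \delta\}$. It is therefore compact. This is the only place boundary regularity enters: it rules out singularities accumulating at $\partial\Omega$. The rest of the argument is interior and uses stationarity.

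For (i), I would use the dimension bound for stationary Lipschitz solutions, \cite[Theorem 3.7]{BD2} (Duggan's $n-4$ bound). In $n=4$ this gives $\dim_{\mathcal H} \sing(u) \le 0$. To upgrade this to isolatedness I would argue by blow-up at a point $x_0 \in \sing(u)$. The graph $\mathscr{G}_u$ is stationary with density bounded in terms of $M = \Lip u$, so the monotonicity formula gives tangent cones. These are the limits of the rescalings $u_{x_0,r}(z) = r^{-1}(u(x_0+rz)-u(x_0))$, which are uniformly Lipschitz. By Arzelà--Ascoli these rescalings subconverge locally uniformly to a Lipschitz, $1$-homogeneous $v:\mathbb{R}^4\to\mathbb{R}^m$. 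By Proposition \ref{prop: stability} $v$ is a viscosity solution. Because the convergence is graphical with Lipschitz control, stationarity also passes to the limit and the varifold convergence has multiplicity one. The cone $\mathscr{G}_v$ is then a graphical multiplicity-one cone.

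Its link is the graph of a Lipschitz map over $S^3$, and $\sing(v)$ is a cone of dimension $\le 0$ by the same dimension bound. Hence $\sing(v) \subset \{0\}$, so the cone is regular, i.e.\ smooth away from the vertex. This already gives the first half of (ii). For isolatedness, suppose singular points $x_k \to x_0$ with $r_k := |x_k - x_0| \to 0$. Rescaling by $r_k$ produces singular points of $u_{x_0,r_k}$ on the unit sphere. Since Theorem \ref{Allard} is an $\varepsilon$-regularity statement, singular points persist under uniform convergence: if the limit were regular at a point of the unit sphere, the rescalings would be $L^\infty$-close to an affine map there and hence regular nearby. So the limit cone $v$ would have a singular point on $S^3$, contradicting $\sing(v)\subset\{0\}$. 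Therefore $\sing(u)$ is discrete, and being compact it is finite, which proves (i).

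For the uniqueness statement in (ii), I would invoke Allard--Almgren/Simon uniqueness of tangent cones for multiplicity-one stationary cones with smooth cross-section. Simon's Łojasiewicz-type theorem applies because the link is a smooth compact minimal submanifold of $S^{n+m-1}$ and the multiplicity is one. This step is the main obstacle. One has to check that Simon's hypotheses hold: the link must be smooth and embedded with multiplicity one, and the convergence of rescalings must be in the sense Simon requires. Here I would rely on the graphical structure and on Theorem \ref{Allard} on annuli to get $C^{2}$ convergence of the links. The phrase ``up to rotation graphical'' then follows because $\mathscr{G}_v$ is itself a graph over the original $\mathbb{R}^4$.
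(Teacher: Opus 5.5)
Your proposal is correct and follows the same outline as the paper's proof. Boundary regularity confines $\sing(u)$ to a compact subset of $\Omega$ away from $\partial\Omega$. The $n-4$ dimension bound applied to the homogeneous, graphical, multiplicity-one blow-up rules out a ray of singularities, so tangent cones are regular, and Simon's theorem gives uniqueness.

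The only difference is in what you prove versus cite. You derive discreteness of $\sing(u)$ yourself, by blowing up at an accumulation point and using $\varepsilon$-regularity as in the proof of Theorem \ref{thm: epsilonreg}, and you sketch the multiplicity-one graphical structure of the tangent cones. The paper instead cites \cite[Theorem 3.7]{BD2} and \cite[Theorem 11.53 \& Theorem 11.54]{GM12} for these two facts.
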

\begin{proof}
    Any Lipschitz stationary solution $u$ is a viscosity solution, so we may apply Theorem \ref{thm: boundary}. Theorem \ref{thm: boundary} implies there are $\delta > 0$ and $\beta \in (0,\alpha)$ such that $u \in C^{1,\beta}(\overline{\Omega_\delta}; \mathbb{R}^m)$. Hence, $\sing(u)$ is a compact subset of $\Omega \setminus \overline{\Omega_\delta}$. Since $\sing(u)$ is discrete \cite[Theorem 3.7]{BD2}, it is finite. This proves (i).

    Fix $p \in \sing(u)$. That the tangent cones to $\mathscr{G}_u$ at $p$ are multiplicity one and graphical up to rotation is proved in \cite[Theorem 11.53 \& Theorem 11.54]{GM12}. Let $\mathcal{C}$ be a tangent cone to $\mathscr{G}_u$ at $p$. Without loss of generality, we may assume $\mathcal{C}$ has vertex at the origin. If $p_0 \in \mathcal{C} \setminus \{0\}$ is a singular point, then $\mathcal{C}$ contains a ray of singularities. However, $\mathcal{C}$ is 4D and graphical so that the Hausdorff dimension of its singular set is at most zero \cite[Theorem 3.7]{BD2}, which contradicts the ray of singular points. We conclude that $\mathcal{C}$ is regular. Since the tangent cones are multiplicity one and regular, uniqueness now follows by a famous result of Simon \cite{Si84}.
\end{proof}
\begin{remark}\label{rmk: conic}
    Condition (ii) is independent of whether $\mathscr{G}_u$ is compact. Thus, (ii) implies that 4D stationary graphs are \emph{asymptotically conic} at each singularity (see \cite[pp. 24-25]{Sm93}). 
\end{remark}
\begin{remark}
    An analogous theorem holds for 5D Lipschitz special Lagrangian graphs (as well as Remark \ref{rmk: conic}) if we assume, for instance, that $\partial \Omega \in C^{2,\alpha}$ and $\phi \in C^{2,\alpha}(\partial \Omega; \mathbb{R}^m)$. To see this, repeat the argument with \cite[Theorem 3.9]{BD2} (see also the proof) and \cite[Theorem 1.3]{SS14} replacing \cite[Theorem 3.7]{BD2} and \cite[Theorem 1.1]{SS14}, respectively.
\end{remark}

In \cite{Dim25}, it was shown that, for any fixed $N \in \mathbb{N}$, there is a strictly stable stationary solution to the MSS defined on a smooth domain $\Omega \subset \mathbb{R}^4$ having smooth boundary data and precisely $N$ isolated singularities in $\Omega$. We therefore have proved an optimal regularity statement for stationary solutions with smooth boundary data when $n = 4$:\footnote{See \cite[Remark 2]{DG26} for an analogous statement for the special Lagrangian equation.}
\vspace{.2cm}
\begin{center}
    \emph{If $\Omega \subset \mathbb{R}^4$ is a bounded smooth domain and $u \in \Lip(\Omega;\mathbb{R}^m)$ is a stationary solution to the MSS in $\Omega$ with $C^\infty$ boundary data, then singularities may occur and their number can be arbitrarily large.}
\end{center}

\section{An $\varepsilon$-regularity theorem}
We establish an $\varepsilon$-regularity theorem for Lipschitz viscosity solutions which generalizes \cite[Proposition 4.20]{BD2}. Since the result is local, we only consider $\Omega := B_1^n(0)$. We define the \emph{oscillation} of a function $u \in C^0(B_1^n(0);\mathbb{R}^m)$ by 
\[\osc_{B_1^n(0)} u:= \sup_{x,y \in B_1^n(0)}|u(x) - u(y)|.\]
\begin{theorem}[$\varepsilon$-regularity]\label{thm: epsilonreg}
    Fix $n, m \in \mathbb{N}$ and let $M > 0$. Then there is an $\varepsilon_0 := \varepsilon_0(n,m,M) > 0$ such that, if $\varepsilon < \varepsilon_0$ and $u := (u^1, \tilde{u}) \in \Lip(B_1^n(0); \mathbb{R}^m) \cap C^0(\overline{B_1^n(0)}; \mathbb{R}^m)$ solves the MSS in $B_1^n(0)$ in the viscosity sense with
    \[\Lip u \leq M, \qquad \osc_{B_1^n(0)} \tilde{u} < \varepsilon,\]
    then $u$ is analytic in $B_{1/2}^n(0)$. 
\end{theorem}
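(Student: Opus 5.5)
The plan is a compactness argument by contradiction. The goal is to reduce to the case where the limit graph lies in an affine $(n+1)$-plane. There it is a codimension-one minimal graph, and Savin's Allard Theorem (Theorem \ref{Allard}) can be applied uniformly.

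\emph{Step 1: contradiction sequence.} Suppose the statement fails for some $n,m,M$. Then there are viscosity solutions $u_k=(u_k^1,\tilde u_k)$ with $\Lip u_k\le M$, $\osc_{B_1^n(0)}\tilde u_k<1/k$, and $\sing(u_k)\cap B_{1/2}^n(0)\neq\emptyset$.

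\begin{itemize}
\item Subtracting the constant $u_k(0)$ preserves the viscosity property, since the MSS is invariant under vertical translations. So we may assume $u_k(0)=0$.
\item By Arzel\`a--Ascoli, a subsequence converges uniformly on $\overline{B_1^n(0)}$ to some $u=(u^1,\tilde u)$ with $\Lip u\le M$.
\item Since the oscillation tends to zero and $\tilde u_k(0)=0$, we get $\tilde u\equiv 0$.
\item By Proposition \ref{prop: stability}, $u$ is a viscosity solution of the MSS.
\end{itemize}

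\emph{Step 2: the limit is a smooth codimension-one minimal graph.} First, $u^1$ solves the scalar MSE in the viscosity sense. To see this, let $\varphi$ touch $u^1$ from above at $x_0$. Put $H(x,z)=z^1-\varphi(x)-K|\tilde z|^2$, a comparison function for $\mathscr G_u\subset\R^{n+1}\times\{0\}$. The viscosity condition for the MSS, together with \eqref{eq: trace}, forces an admissible $n$-plane $T\perp\nabla H(p_0)$ with $\tr_T D^2H(p_0)\le 0$.

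For $K$ large, the negative $\tilde z$-directions of $D^2H$ make the minimizing plane lie essentially inside $\R^{n+1}\times\{0\}$. There the planes orthogonal to $\nabla H(p_0)$ are exactly the tangent plane of the graph of $\varphi$, which is the classical MSE inequality for $\varphi$. One can argue either as in the proof of Proposition \ref{prop: euivalent}, or by taking $K\to\infty$ and passing to a limit of planes in the compact Grassmannian.

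Next, I would upgrade the Lipschitz viscosity solution $u^1$ of the MSE to a smooth one. I expect this to be the main obstacle. Lemma \ref{lem: decoupling2} alone only gives $C^{\alpha}$ via Krylov--Safonov. Instead, I would use that with $|Du^1|\le M$ the MSE can be rewritten as a uniformly elliptic quasilinear equation $F(Du,D^2u)=0$ with the gradient dependence truncated at $M$. Then I would invoke the $C^{1,\alpha}$ interior theory for viscosity solutions of such equations, either Trudinger's or Caffarelli's method with Lipschitz gradient dependence.

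Once $u^1\in C^{1,\alpha}$, a Taylor expansion at every point together with Theorem \ref{Allard} gives $u^1\in C^{2,\alpha}$, hence analytic by Remark \ref{rmk:analytic}. In particular $\|D^2u\|_{L^\infty(B_{3/4}^n(0))}\le C_0$ for a constant depending on this fixed limit.

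\emph{Step 3: transfer regularity to $u_k$ via Allard.} Fix $x\in\overline{B_{1/2}^n(0)}$, let $r<1/8$, and set $\ell_x(y)=u(x)+Du(x)(y-x)$. Note $|Du(x)|\le M$. Rescale as in the proof of Theorem \ref{thm: boundary}:
\[
v_k(z)=r^{-1}\big(u_k(x+rz)-u_k(x)\big),\qquad L(z)=Du(x)z .
\]
Then
\[
\|v_k-L\|_{L^\infty(B_1^n(0))}\le C_0 r+2r^{-1}\|u_k-u\|_{L^\infty(B_1^n(0))}.
\]

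\begin{itemize}
\item First choose $r$ so small that $C_0r<\varepsilon_\ast(n,m,M)/2$. Here I use that $\varepsilon_\ast$ can be taken uniform for $|A|\le M$, by monotonicity in $|A|$ or compactness.
\item Then choose $k$ so large that the second term is below $\varepsilon_\ast/2$, uniformly in $x$.
\end{itemize}

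Theorem \ref{Allard} then shows $u_k$ is $C^2$, hence analytic, on $B_{r/2}^n(x)$ for every $x\in\overline{B_{1/2}^n(0)}$. So $\sing(u_k)\cap B_{1/2}^n(0)=\emptyset$ for large $k$, which is the desired contradiction.
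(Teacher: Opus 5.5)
Your architecture matches the paper's proof: a contradiction sequence normalized at the origin, Arzel\`a--Ascoli plus Proposition \ref{prop: stability} to obtain a limit $(u^1,0)$, reduction of $u^1$ to a viscosity solution of the scalar MSE (the paper's Lemma \ref{lem: MSEsolution}), and Theorem \ref{Allard} to regularize $u_k$ near the limit. Your Step 3 covers $\overline{B_{1/2}^n(0)}$ uniformly instead of tracking $x_j\to x_\infty$; that is a harmless variant and only needs $\varepsilon_\ast$ uniform for $|A|\le M$.

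\textbf{The gap is in Step 2: the penalization has the wrong sign.} Take $H=z^1-\varphi(x)-K|\tilde z|^2$. Then $\nabla H(p_0)=(-D\varphi(x_0),e_1)$, so every $\tilde z$-direction is orthogonal to $\nabla H(p_0)$. On those directions $D^2H(p_0)=-2K$, with no cross terms. An admissible $n$-plane $T$ may therefore contain $k:=\min\{n,m-1\}\ge 1$ of these directions, which gives
\[
\tr_T D^2H(p_0)\le -2Kk+n\|D^2\varphi(x_0)\|<0
\]
for $K$ large. So the MSS viscosity condition holds trivially and carries no information about $\varphi$. Negative Hessian in the $\tilde z$-directions pulls the trace-minimizing plane into those directions rather than keeping it out of them. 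Letting $K\to\infty$ only makes this worse.

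\textbf{How to fix it.} Use the paper's choice $H=z^1-\varphi(x)+K|\tilde z|^2$, which is still $\le 0$ on $\mathscr G_u$ because $\tilde z=0$ there. Let $T_0$ be the tangent plane to the graph of $\varphi$ inside $\mathbb{R}^{n+1}\times\{0\}$. Then $\nabla H(p_0)^\perp=T_0\oplus(\{0\}\times\mathbb{R}^{m-1})$, and the compression of $D^2H(p_0)$ to this subspace is block diagonal:
\begin{itemize}
\item the $T_0$-block has eigenvalues bounded by $\|D^2\varphi(x_0)\|$;
\item the other block is $2K\,I$.
\end{itemize}
Once $2K>\|D^2\varphi(x_0)\|$, Ky Fan's principle \eqref{eq: trace}, applied inside $\nabla H(p_0)^\perp$, shows the minimal trace over admissible $T$ equals $\tr_{T_0}D^2H(p_0)$. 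With $g_\varphi=I+D\varphi^TD\varphi$, the viscosity condition then gives
\[
-\tr\big(g_\varphi^{-1}D^2\varphi(x_0)\big)=\tr_{T_0}D^2H(p_0)\le 0,
\]
which is exactly the MSE subsolution inequality. No limit in $K$ is needed.

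\textbf{On upgrading $u^1$ to analytic.} The paper simply asserts this. Check your citation: the standard $C^{1,\alpha}$ theory with gradient dependence assumes $|F(p,X)-F(q,X)|\le C|p-q|$ uniformly in $X$, and $\tr(a(p)X)$ does not satisfy this. A safer route works on small balls: compare $u^1$ with the classical solution of the MSE Dirichlet problem with boundary data $u^1$, which is solvable on balls for continuous data. The comparison uses the principle between a viscosity solution and a perturbed strict classical super- or subsolution.
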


Heuristically, this says that graphs of Lipschitz viscosity solutions to the MSS lying suitably close to a codimension one graph embedded in $\mathbb{R}^{n+m}$ are regular. First, we state a simple lemma.
\begin{lemma}\label{lem: MSEsolution}
    Suppose $u \in  \Lip(B_1^n(0); \mathbb{R}^m) \cap C^0(\overline{B_1^n(0)}; \mathbb{R}^m)$ solves the MSS in $B_1^n(0)$ in the viscosity sense and $u := (v, 0) \in \mathbb{R} \times \mathbb{R}^{m-1}$, where $v : B_1^n(0)\rightarrow \mathbb{R}$. Then $v$ is a viscosity solution for the scalar MSE. In particular, $u$ is analytic in $B_1^n(0)$.
\end{lemma}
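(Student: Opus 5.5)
We need to show two things: that $v$ is a viscosity solution of the scalar MSE, and that $v$ (hence $u$) is analytic in $B_1^n(0)$. The plan is to transfer scalar test functions into comparison surfaces for $\mathscr{G}_u$ that only see the first coordinate, and then invoke the scalar MSE theory.

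\emph{Step 1 (viscosity property).} Suppose $\varphi \in C^2$ touches $v$ from above at $x_0$, and suppose for contradiction that $\mathrm{div}\big(\nabla\varphi/\sqrt{1+|\nabla\varphi|^2}\big)(x_0) < 0$. As in the proof of Lemma \ref{lem: decoupling2}, after adding $|x-x_0|^4$ we may assume strict touching. Set
\[H(x,z) := z^1 - \varphi(x) - K\,|\tilde z|^2, \qquad \tilde z = (z^2,\ldots,z^m),\]
with $K>0$ to be chosen. On $\mathscr{G}_u$ we have $\tilde z = 0$, so $H\lvert_{\mathscr{G}_u} = v - \varphi \le 0$, with equality at $p_0 = (x_0, v(x_0), 0)$. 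Moreover
\[\nabla H(p_0) = (-\nabla\varphi(x_0), e_1),\]
which has no $\tilde z$-component. The $n$-planes $T \perp \nabla H(p_0)$ are exactly those contained in $\{w : w^1 = \nabla\varphi(x_0)\cdot w^x\}$. Such a plane either is the tangent plane $T_0$ to the graph of $\varphi$ inside $\mathbb{R}^{n+1}\times\{0\}$, or it tilts into the $\tilde z$-directions.

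We then have
\[\tr_T D^2H(p_0) = \tr_T\big(-D^2\varphi \oplus 0 \oplus (-2K I_{m-1})\big).\]
The first piece, $-\tr_T D^2\varphi$, is the contribution from the $x$-directions; $D^2\varphi$ acts on the projection of $T$ to $\mathbb{R}^n$. The second piece is $-2K\,|\pi_T|_{\tilde z}|^2$, the term weighted by how much $T$ tilts into $\tilde z$.

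For $T = T_0$, the quantity $-\tr_{T_0}D^2\varphi$ equals $-\sqrt{1+|\nabla\varphi|^2}$ times the mean curvature of the graph of $\varphi$. Under the contradiction hypothesis this is $>0$. For $T$ near $T_0$, continuity keeps the trace positive. For $T$ away from $T_0$, the tilt into $\tilde z$ is bounded below, and taking $K$ large makes the $-2K$ term dominate.

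This is where the sign must be handled with care. With the sign above, the $-2K\,\tr_T(\tilde z\text{-block})$ contribution is \emph{negative}, which is the wrong direction. So I would instead use $H = z^1 - \varphi + K|\tilde z|^2$. Then $H\lvert_{\mathscr{G}_u}$ is unchanged, because $\tilde z = 0$ on the graph, while the $\tilde z$-contribution to $\tr_T D^2H(p_0)$ becomes $+2K|\cdot|^2 \ge 0$.

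A compactness argument on the Grassmannian then gives $\min_{T\perp\nabla H}\tr_T D^2H(p_0) > 0$ for $K$ large. This contradicts the definition of viscosity solution for the MSS. The supersolution case follows symmetrically, via $u \mapsto -u$.

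\emph{Step 2 (analyticity).} By Step 1, $v$ is a Lipschitz viscosity solution of the scalar MSE, which is uniformly elliptic for bounded gradient. I would then apply either of two arguments:
\begin{itemize}
\item the classical route: Lipschitz viscosity solutions of the MSE are $C^{1,\alpha}$ (via \cite{CC95}-type regularity, or by uniqueness of the Dirichlet problem compared with the smooth solution on small balls), and hence analytic by De Giorgi–Nash and Schauder bootstrapping;
\item the direct route: at a point of differentiability, Theorem \ref{Allard} gives analyticity nearby, and the scalar Lipschitz theory rules out singular points.
\end{itemize}
In either case, once $v$ is analytic, $u = (v,0)$ is analytic in $B_1^n(0)$.

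The main obstacle I expect is the Grassmannian compactness in Step 1. One has to check uniformly that every plane orthogonal to $\nabla H(p_0)$ either is close to $T_0$ or picks up a definite $\tilde z$-component, so that a single choice of $K$ works for all such planes.
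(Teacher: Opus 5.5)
Your proposal is correct and is essentially the paper's proof: the paper's sketch uses exactly your corrected comparison function $H(x,z)=z^1-\varphi(x)+C|\tilde z|^2$, and, like you, it takes analyticity of $v$ from the regularity theory for the scalar MSE. The Grassmannian compactness you flag is not a real obstacle: on $\nabla H(p_0)^\perp = T_0\oplus(\{0\}\times\{0\}\times\mathbb{R}^{m-1})$ the form $D^2H(p_0)$ is block diagonal, so once $2K$ exceeds the eigenvalues of its $T_0$-block, the identity \eqref{eq: trace} applied within $\nabla H(p_0)^\perp$ gives $\min_{T\perp\nabla H(p_0)}\tr_T D^2H(p_0) = -\tr\big((I+\nabla\varphi\otimes\nabla\varphi)^{-1}D^2\varphi\big)(x_0)>0$ directly.
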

\begin{proof}[Proof Sketch]
    Suppose $\varphi \in C^2$ touches $v$ from above at $x_0$. Then $H(x,z) := z^1 - \varphi(x) + C|\tilde{z}|^2$ can be shown to be a comparison function for some constant $C$ using a modified but simpler version of the argument found in \cite[p. 21]{BD2}. We omit the technical details.
\end{proof}

Theorem \ref{thm: epsilonreg} follows from Lemma \ref{lem: MSEsolution} and the stability property.
\begin{proof}[Proof of Theorem \ref{thm: epsilonreg}]
    Suppose the conclusion of the theorem is false. Then there is a sequence $\{u_j\}_1^\infty$ of viscosity solutions, $u_j := (u_j^1,\tilde{u}_j)$, satisfying
    \[\Lip u_j \leq M,\qquad \osc_{B_1^n(0)}\tilde{u}_j \rightarrow 0\]
    while each $u_j$ has a singular point $x_j \in B_{1/2}^n(0)$. After translation, we may assume $u_j(0)= 0$ for each $j$. By the Arzel\'a--Ascoli Theorem, we have $u_j \rightarrow w := (v,0)$ locally uniformly in $B_1^n(0)$ for some $w$ with $\Lip w \leq M$. Compactness of $\overline{B_{1/2}^n(0)}$ implies we may also assume $x_j \rightarrow x_\infty \in \overline{B_{1/2}^n(0)}$. Then Proposition \ref{prop: stability} implies $w$ is a viscosity solution on $B_1^n(0)$ as well. Lemma \ref{lem: MSEsolution} then gives smoothness of the first component function $v$ of $w$; hence, $w$ is smooth also.

    We claim that $x_j$ is in fact a regular point for $u_j$ when $j$ is large. Indeed, if $\ell : \mathbb{R}^n \rightarrow \mathbb{R}^m$ is the defining affine map for the tangent plane to $\mathscr{G}_w$ at $x_\infty$, then smoothness of $w$ implies
    \[r^{-1} \sup_{B_r^n(x_\infty)}|w - \ell| \rightarrow 0 \text{ as } r\rightarrow 0.\]
    Let $\varepsilon_\ast$ be the constant in Theorem \ref{Allard}. Taking $r$ sufficiently small, we may assume
    \[r^{-1} \sup_{B_{r}^n(x_\infty)}|w - \ell| < \varepsilon_\ast/2 \Rightarrow \sup_{B_r^n(x_\infty)}|u_j - \ell| < r\varepsilon_\ast\]
    for sufficiently large $j$. Theorem \ref{Allard} ensures the $u_j$ are smooth in the ball $B_{r/2}^n(x_\infty)$ for large $j$. Since $x_j \rightarrow x_\infty$, we have reached a contradiction.
\end{proof}

\section{Discussion}
We are left with many unresolved questions concerning the regularity of Lipschitz viscosity solutions to the system, along with whether viscosity solutions are stationary. For example, while Lipschitz viscosity solutions on a domain $\Omega \subset \mathbb{R}^2$ are regular by Lemma \ref{lem: decoupling2}, the argument does not generalize to higher dimensional domains. However, due to the observed parallels with the stationary case and the fact that it holds when the codimension is one, it is natural to expect Lipschitz viscosity solutions are stationary. 
\begin{conj}\label{conj: stationary}
    Fix $n \geq 3$ and $m \geq 2$. If $u \in \Lip(B_1^n(0); \mathbb{R}^m) \cap C^0(\overline \Omega; \mathbb{R}^m)$ solves the MSS in $\Omega$ in the viscosity sense, then $u$ is a stationary solution.
\end{conj}
\begin{remark}
     Interestingly, the case $n = 2$ and $m \geq 2$ is derived from an ABP argument. For potential strategies when $n \geq 3$, including a necessary and sufficient condition for stationarity, the reader should consult \cite[Section 4]{BD2}.
\end{remark}
Conjecture \ref{conj: stationary} can be viewed as a little cousin to the Lawson-Osserman Conjecture (see \cite[Conjecture 2.1]{LO1}). Since classical solutions are stationary, Theorem \ref{thm: 2d} shows that Conjecture \ref{conj: stationary} holds when $n = 2$ mirroring the Lawson-Osserman Conjecture \cite{HMT23}. An intermediate step is to determine the regularity of 3D viscosity solutions.
\begin{conj}\label{conj: Bernstein}
    Suppose $\Omega \subset \mathbb{R}^3$. If $u \in \Lip(\Omega; \mathbb{R}^m) \cap C^0(\overline \Omega; \mathbb{R}^m)$ solves the MSS in $\Omega$ in the viscosity sense, then $u$ is analytic in $\Omega$.
\end{conj}
\begin{remark}
     Since Lipschitz stationary solutions are viscosity solutions, there exist singular solutions beginning when $\Omega \subset \mathbb{R}^4$ (e.g. the Lawson-Osserman cone \cite{LO1}). 
\end{remark}

As evidence for a positive answer to Conjecture \ref{conj: Bernstein}, we prove a Bernstein-type theorem for homogeneous entire solutions on $\mathbb{R}^3$ without too much additional effort. Recall that full 2D regularity implies Lipschitz viscosity solutions are stationary when $n =2$ for each $m \geq 2$. Hence, the Bernstein Theorem for entire Lipschitz viscosity solutions holds for all $m \geq 2$ when $n = 2$ by \cite{CO67}. Since the proof of the Bernstein-type theorem is in some sense standard, we only sketch the details.
\begin{proposition}[Bernstein-type Theorem]\label{prop: bernstein}
    Suppose $u \in \Lip(\mathbb{R}^3; \mathbb{R}^m)$ is an entire viscosity solution to the MSS on $\mathbb{R}^3$. If $u$ is homogeneous, then $u$ is linear.
\end{proposition}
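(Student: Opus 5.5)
The plan is to reduce to a statement about a cone over a $2$-dimensional link and then use the 2D theory. Homogeneity here means $u(tx)=tu(x)$ for $t>0$; the graph is then a cone $\mathscr{G}_u$ with vertex at the origin in $\mathbb{R}^{3+m}$. The aim is to show that $u$ is analytic on $\mathbb{R}^3\setminus\{0\}$, so that the link $\Sigma:=\mathscr{G}_u\cap\mathbb{S}^{2+m}$ is a smooth minimal surface in the sphere. Then $\mathscr{G}_u$ is a Lipschitz graphical minimal cone that is smooth away from the vertex, hence stationary. A Bernstein argument for such 3D cones will then force linearity.

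\emph{Step 1: regularity away from the vertex.} Fix $x_0\neq 0$. Locally near $x_0$, the cone $\mathscr{G}_u$ splits as a product: its tangent cone at $\psi_u(x_0)$, and in fact $\mathscr{G}_u$ itself near $\psi_u(x_0)$, is invariant under translation in the radial direction $\psi_u(x_0)$. Take any blow-up of $u$ at $x_0$, i.e.\ a limit of $u_r(y)=r^{-1}(u(x_0+ry)-u(x_0))$. By Arzel\'a--Ascoli and Proposition \ref{prop: stability}, it is an entire Lipschitz viscosity solution. By homogeneity it is invariant under translations along the radial direction, so $u_r(y+sx_0)=u_r(y)+s\,c$ with $c=u(x_0)/|x_0|$ in the limit. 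Subtracting this linear part (rigid motions and linear shears along that direction preserve the viscosity property after rotation) gives a solution depending only on two variables. I would then show that such a translation-invariant viscosity solution restricts to a 2D viscosity solution of the MSS on the orthogonal plane. This is done by testing with comparison functions independent of the translation variable, as in the dimension-reduction argument of Lemma \ref{lem: MSEsolution}. By Theorem \ref{thm: 2d} the reduced solution is analytic, hence so is the blow-up, hence it is affine in a neighborhood, and in particular it is flat at the origin.

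A cleaner route would run the same reduction directly on $u$ near $x_0$. In polar coordinates, $u$ is determined by its restriction to a 2D transversal slice. The obstacle is that the slice equation is no longer the flat MSS but a minimal surface system in a warped metric, so Theorem \ref{thm: 2d} does not apply verbatim. For this reason I prefer the blow-up route. Flatness of one blow-up gives $L^\infty$-closeness of $u_r$ to an affine map for small $r$, and Theorem \ref{Allard} then yields analyticity of $u$ near $x_0$. So $\sing(u)\subset\{0\}$.

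\emph{Step 2: stationarity and the link.} Since $\sing(u)\subset\{0\}$ has $\mathcal{H}^{2}$-measure zero (codimension $3>1$), $u$ is stationary by removal of singularities \cite[Corollary 4.9]{BD2}. Thus $\mathscr{G}_u$ is a stationary cone, smooth away from $0$, with graphical Lipschitz structure. Its link $\Sigma$ is a smooth compact minimal surface in $\mathbb{S}^{2+m}$, and it is a graph over an open hemisphere-type region of $\mathbb{S}^2$, so it is topologically a sphere.

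\emph{Step 3: Bernstein for the cone.} A minimal $2$-sphere in $\mathbb{S}^{2+m}$ is, by Calabi/Almgren-type rigidity, either totally geodesic or a superminimal sphere of higher area. Graphicality over $\mathbb{S}^2$ with bounded slope should exclude the nontotally geodesic ones. I would argue this via the known results for 3D Lipschitz minimal graphical cones, namely that tangent cones to Lipschitz minimal graphs in dimension $3$ are flat (Fischer-Colbrie / Barbosa; see also \cite[Section 4]{BD2}). Hence $\Sigma$ is a great $2$-sphere, $\mathscr{G}_u$ is a plane, and $u$ is linear. The main obstacle is Step 1, making rigorous the passage from translation-invariance to a genuine 2D viscosity solution. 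Step 3 is citation-dependent but standard.
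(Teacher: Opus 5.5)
Your outline is essentially the paper's proof: you blow up at $x_0\neq 0$, use homogeneity to make the blow-up invariant along the line spanned by $(e_1,u(x_0)/|x_0|)$, reduce to an entire 2D Lipschitz viscosity solution by lifting test functions independently of the translation variable, obtain $\sing(u)\subset\{0\}$ from Theorem~\ref{thm: 2d} and Theorem~\ref{Allard}, and finish with removal of singularities and Barbosa/Fischer-Colbrie. Two small fixes are needed: the reduction must be a rotation of $\mathbb{R}^{3+m}$ taking that line to a coordinate axis, because subtracting the linear part is a shear, which does not preserve the MSS, so the 2D solution is the cross-section $\tilde w$ (an invertible linear image of $w$, not $w$ itself); and the step you call the main obstacle is immediate, since if $\mu_1+\mu_2>0$ for $D^2H$, then the lift $h(t,q):=H(q)$ has $D^2h=0\oplus D^2H$, whose three smallest eigenvalues sum to $\mu_1+\mu_2>0$, which is exactly how the paper argues through Proposition~\ref{prop: euivalent}.
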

\begin{proof}
    Suppose $x_0 \in \mathbb{R}^3 \setminus \{0\}$ is a singular point for $u$. If $r_j \rightarrow 0^+$, consider the sequence of Lipschitz re-scalings of $u$
    \[u_{r_j}(y) := r_j^{-1}\big(u(x_0 + r_jy) - u(x_0)\big).\]
    By the Arzel\'a-Ascoli Theorem and Proposition \ref{prop: stability}, we can assume $u_{r_j} \rightarrow v$ locally uniformly for some entire Lipschitz viscosity solution $v$ on $\mathbb{R}^3$. Set $e_1 := x_0/|x_0|$ and complete this to a basis $\{e_1,e_2,e_3\}$ of $\mathbb{R}^3$. We can therefore define a new coordinate system $(t,y^\prime) \in \mathbb{R} \times e_1^\perp$, where $y^\prime$ is the coordinate on $e_1^\perp$. In these new coordinates, we have 
    \[v(t,y^\prime) = \frac{u(x_0)}{|x_0|} \cdot t + w(y^\prime)\]
    for some function $w \in \Lip(e_1^\perp; \mathbb{R}^m)$. Thus, $\mathscr{G}_v$ is invariant under translation by the line spanned by $(e_1, u(x_0)/|x_0|) \in \mathbb{R}^{3 + m}$. 
    
    Rotating the graph so that $\ell$ coincides with a horizontal coordinate axis, we obtain a $(3,0)$-set of the form $\mathbb{R} \times \mathscr{G}_{\tilde{w}}$ where $\tilde{w} \in \Lip(\mathbb{R}^2; \mathbb{R}^m)$. Abusing notation, we let $(t,y^\prime)$ also represent the coordinates on $\mathbb{R} \times \mathbb{R}^2$. If $\mathscr{G}_{\tilde{w}}$ is not a $(2,0)$-set, then there is a map $H \in C^2(\mathbb{R}^{2+m})$ and a point $p_0 \in \mathscr{G}_{\tilde{w}}$ such that $H \lvert_{\mathscr{G}_{\tilde{w}}}$ has a local maximum at $p_0$, $\nabla H(p_0) \neq 0$, and $\mu_1(p_0) + \mu_2(p_0) > 0$ where $\mu_1(p_0)$ and $\mu_2(p_0)$ are the two smallest eigenvalues of $D^2 H(p_0)$. Order the eigenvalues $\{\mu_i(p_0)\}_{i = 1}^{2+m}$ of $D^2 H$ at $p_0$ from least to greatest. Extending $H$ to a lift $h(t,y^\prime) := H(y^\prime)$ on $\mathbb{R}^3$, we see that $h \lvert_{\mathbb{R} \times \mathscr{G}_{\tilde{w}}}$ has a local maximum at $(0,p_0)$ with $\nabla h(0,p_0) \neq 0$. Furthermore, $D^2 h(0,p_0)$ has eigenvalues $\{0, \mu_i(p_0)\}_{i = 1}^{m+2}$. That is, the three smallest eigenvalues (not necessarily in order) of $D^2h$ at $(0,p_0)$ are $0,\, \mu_1(p_0)$, and $\mu_2(p_0)$. Since their sum is positive, this contradicts that $\mathbb{R} \times \mathscr{G}_{\tilde{w}}$ is a $(3,0)$-set. We conclude that $\mathscr{G}_{\tilde{w}}$ is a $(2,0)$-set so that $\tilde{w}$ is an entire Lipschitz viscosity solution to the MSS by Proposition \ref{prop: euivalent}.
    
    Now, $\tilde{w}$ is analytic by Theorem \ref{thm: 2d} so that the blow-up $v$ of $u$ is analytic as a map $\mathbb{R}^3 \rightarrow \mathbb{R}^m$. Theorem \ref{Allard} shows that $x_0$ is a regular point for $u$. Since $x_0 \neq 0$ was arbitrary, $\sing(u) \subset \{0\}$. It follows that $\sing(u)$ has finite $\mathcal{H}^0$ measure, implying $u$ is stationary by removal of singularities \cite[Corollary 4.9]{Sa18}. Since such a solution is analytic \cite{B79, FC1}, the proof is complete. 
\end{proof}

The primary obstruction to turning Proposition \ref{prop: bernstein} into a solution of Conjecture \ref{conj: Bernstein} is once again a lack of a monotonicity formula, since it is unclear whether blow-ups of a Lipschitz viscosity solution are homogeneous. If the answer to this question is positive, then Conjecture \ref{conj: Bernstein} holds by the standard blow-up scheme. In closing, we note that another important but under-developed line of inquiry is determining a proper existence framework.

\printbibliography

@article {A,
    AUTHOR = {Aronszajn, N.},
     TITLE = {A unique continuation theorem for solutions of elliptic
              partial differential equations or inequalities of second
              order},
   JOURNAL = {J. Math. Pures Appl. (9)},
  FJOURNAL = {Journal de Math\'ematiques Pures et Appliqu\'ees. Neuvi\`eme
              S\'erie},
    VOLUME = {36},
      YEAR = {1957},
     PAGES = {235--249},
      ISSN = {0021-7824,1776-3371},
   MRCLASS = {35.0X},
  MRNUMBER = {92067},
MRREVIEWER = {H.\ Bremekamp},
}

@article {San20,
    AUTHOR = {Santilli, Mario},
     TITLE = {Normal bundle and {A}lmgren's geometric inequality for
              singular varieties of bounded mean curvature},
   JOURNAL = {Bull. Math. Sci.},
  FJOURNAL = {Bulletin of Mathematical Sciences},
    VOLUME = {10},
      YEAR = {2020},
    NUMBER = {1},
     PAGES = {2050008, 24},
      ISSN = {1664-3607,1664-3615},
   MRCLASS = {49Q20 (35D40 49Q10 53A07 53C24)},
  MRNUMBER = {4095952},
MRREVIEWER = {Salvatore\ Stuvard},
       DOI = {10.1142/S1664360720500083},
       URL = {https://doi.org/10.1142/S1664360720500083},
}

@article {BD2,
    AUTHOR = {Dimler, Bryan},
     TITLE = {Partial regularity for {L}ipschitz solutions to the minimal
              surface system},
   JOURNAL = {Calc. Var. Partial Differential Equations},
  FJOURNAL = {Calculus of Variations and Partial Differential Equations},
    VOLUME = {62},
      YEAR = {2023},
    NUMBER = {9},
     PAGES = {Paper No. 260, 30},
      ISSN = {0944-2669,1432-0835},
   MRCLASS = {35J47 (35J50 35J60 49Q05 49Q15 49Q20)},
  MRNUMBER = {4660718},
MRREVIEWER = {Giorgio\ Saracco},
       DOI = {10.1007/s00526-023-02591-7},
       URL = {https://doi.org/10.1007/s00526-023-02591-7},
}

@article {Mo19,
    AUTHOR = {Mooney, Connor},
     TITLE = {A proof of the {K}rylov-{S}afonov theorem without
              localization},
   JOURNAL = {Comm. Partial Differential Equations},
  FJOURNAL = {Communications in Partial Differential Equations},
    VOLUME = {44},
      YEAR = {2019},
    NUMBER = {8},
     PAGES = {681--690},
      ISSN = {0360-5302,1532-4133},
   MRCLASS = {35J60 (35B65 35D40)},
  MRNUMBER = {3952774},
MRREVIEWER = {Armin\ Schikorra},
       DOI = {10.1080/03605302.2019.1581807},
       URL = {https://doi.org/10.1080/03605302.2019.1581807},
}

@article {ASS12,
    AUTHOR = {Armstrong, Scott N. and Silvestre, Luis E. and Smart, Charles
              K.},
     TITLE = {Partial regularity of solutions of fully nonlinear, uniformly
              elliptic equations},
   JOURNAL = {Comm. Pure Appl. Math.},
  FJOURNAL = {Communications on Pure and Applied Mathematics},
    VOLUME = {65},
      YEAR = {2012},
    NUMBER = {8},
     PAGES = {1169--1184},
      ISSN = {0010-3640,1097-0312},
   MRCLASS = {35J60 (35B65 35D40)},
  MRNUMBER = {2928094},
MRREVIEWER = {Jos\'e\ Carmona Tapia},
       DOI = {10.1002/cpa.21394},
       URL = {https://doi.org/10.1002/cpa.21394},
}

@book {CC95,
    AUTHOR = {Caffarelli, Luis A. and Cabr\'e, Xavier},
     TITLE = {Fully nonlinear elliptic equations},
    SERIES = {American Mathematical Society Colloquium Publications},
    VOLUME = {43},
 PUBLISHER = {American Mathematical Society, Providence, RI},
      YEAR = {1995},
     PAGES = {vi+104},
      ISBN = {0-8218-0437-5},
   MRCLASS = {35J60 (35-01 35B45 35B65 35Dxx)},
  MRNUMBER = {1351007},
MRREVIEWER = {P.\ Lindqvist},
       DOI = {10.1090/coll/043},
       URL = {https://doi.org/10.1090/coll/043},
}

@misc{HMT23,
      title={On the Lawson-Osserman conjecture}, 
      author={Jonas Hirsch and Connor Mooney and Riccardo Tione},
      year={2023},
      eprint={2308.04997},
      archivePrefix={arXiv},
      primaryClass={math.AP},
      url={https://arxiv.org/abs/2308.04997},
      note={Invent. Math., to appear. arXiv preprint: 2308.04997}
}

@article {Du86,
    AUTHOR = {Duggan, J. P.},
     TITLE = {Regularity theorems for varifolds with mean curvature},
   JOURNAL = {Indiana Univ. Math. J.},
  FJOURNAL = {Indiana University Mathematics Journal},
    VOLUME = {35},
      YEAR = {1986},
    NUMBER = {1},
     PAGES = {117--144},
      ISSN = {0022-2518,1943-5258},
   MRCLASS = {49F20},
  MRNUMBER = {825631},
MRREVIEWER = {Vanna\ Zanelli},
       DOI = {10.1512/iumj.1986.35.35006},
       URL = {https://doi.org/10.1512/iumj.1986.35.35006},
}

@article {CO67,
    AUTHOR = {Chern, Shiing-Shen and Osserman, Robert},
     TITLE = {Complete minimal surfaces in Euclidean {$n$}-space},
   JOURNAL = {J. Analyse Math.},
  FJOURNAL = {Journal d'Analyse Math\'ematique},
    VOLUME = {19},
      YEAR = {1967},
     PAGES = {15--34},
      ISSN = {0021-7670,1565-8538},
   MRCLASS = {53.04},
  MRNUMBER = {226514},
MRREVIEWER = {H.\ B.\ Jenkins},
       DOI = {10.1007/BF02788707},
       URL = {https://doi.org/10.1007/BF02788707},
}

@article {FC1,
    AUTHOR = {Fischer-Colbrie, D.},
     TITLE = {Some rigidity theorems for minimal submanifolds of the sphere},
   JOURNAL = {Acta Math.},
  FJOURNAL = {Acta Mathematica},
    VOLUME = {145},
      YEAR = {1980},
    NUMBER = {1-2},
     PAGES = {29--46},
      ISSN = {0001-5962,1871-2509},
   MRCLASS = {53C42 (49F10 58E12)},
  MRNUMBER = {558091},
MRREVIEWER = {Jonathan\ Sacks},
       DOI = {10.1007/BF02414184},
       URL = {https://doi.org/10.1007/BF02414184},
}

@incollection {Sa18,
    AUTHOR = {Savin, O.},
     TITLE = {Viscosity solutions and the minimal surface system},
 BOOKTITLE = {Nonlinear analysis in geometry and applied mathematics. {P}art
              2},
    SERIES = {Harv. Univ. Cent. Math. Sci. Appl. Ser. Math.},
    VOLUME = {2},
     PAGES = {135--145},
 PUBLISHER = {Int. Press, Somerville, MA},
      YEAR = {2018},
      ISBN = {978-1-57146-359-3},
   MRCLASS = {35J93 (35B65 35D40 53C42)},
  MRNUMBER = {3823884},
MRREVIEWER = {Naoki\ Yamada},
}

@article {Sm93,
    AUTHOR = {Smale, Nathan},
     TITLE = {Geometric {P}.{D}.{E}.s with isolated singularities},
   JOURNAL = {J. Reine Angew. Math.},
  FJOURNAL = {Journal f\"ur die Reine und Angewandte Mathematik. [Crelle's
              Journal]},
    VOLUME = {440},
      YEAR = {1993},
     PAGES = {1--41},
      ISSN = {0075-4102,1435-5345},
   MRCLASS = {58G20 (53A10 58E20 58G18)},
  MRNUMBER = {1225956},
MRREVIEWER = {Libin\ Mou},
       DOI = {10.1515/crll.1993.440.1},
       URL = {https://doi.org/10.1515/crll.1993.440.1},
}

@book {N,
    AUTHOR = {Nicolaescu, Liviu I.},
     TITLE = {Lectures on the geometry of manifolds},
   EDITION = {Third},
 PUBLISHER = {World Scientific Publishing Co. Pte. Ltd., Hackensack, NJ},
      YEAR = {[2021] \copyright 2021},
     PAGES = {xviii+682},
      ISBN = {978-981-121-482-0; 978-981-121-481-3; 978-981-121-595-7},
   MRCLASS = {53-02 (58-02)},
  MRNUMBER = {4228252},
}

@article {SS14,
    AUTHOR = {Silvestre, Luis and Sirakov, Boyan},
     TITLE = {Boundary regularity for viscosity solutions of fully nonlinear
              elliptic equations},
   JOURNAL = {Comm. Partial Differential Equations},
  FJOURNAL = {Communications in Partial Differential Equations},
    VOLUME = {39},
      YEAR = {2014},
    NUMBER = {9},
     PAGES = {1694--1717},
      ISSN = {0360-5302,1532-4133},
   MRCLASS = {35J60 (35B65 35D40 35J25 35J67)},
  MRNUMBER = {3246039},
MRREVIEWER = {Leonard\ Monsaingeon},
       DOI = {10.1080/03605302.2013.842249},
       URL = {https://doi.org/10.1080/03605302.2013.842249},
}

@article {Je96,
    AUTHOR = {Jensen, Robert R.},
     TITLE = {Uniformly elliptic {PDE}s with bounded, measurable
              coefficients},
   JOURNAL = {J. Fourier Anal. Appl.},
  FJOURNAL = {The Journal of Fourier Analysis and Applications},
    VOLUME = {2},
      YEAR = {1996},
    NUMBER = {3},
     PAGES = {237--259},
      ISSN = {1069-5869,1531-5851},
   MRCLASS = {35J15 (35D10)},
  MRNUMBER = {1379505},
MRREVIEWER = {Ya.\ A.\ Ro\u itberg},
       DOI = {10.1007/s00041-001-4031-6},
       URL = {https://doi.org/10.1007/s00041-001-4031-6},
}

@book {GM12,
    AUTHOR = {Giaquinta, Mariano and Martinazzi, Luca},
     TITLE = {An introduction to the regularity theory for elliptic systems,
              harmonic maps and minimal graphs},
    SERIES = {Appunti. Scuola Normale Superiore di Pisa (Nuova Serie)
              [Lecture Notes. Scuola Normale Superiore di Pisa (New
              Series)]},
    VOLUME = {11},
   EDITION = {Second},
 PUBLISHER = {Edizioni della Normale, Pisa},
      YEAR = {2012},
     PAGES = {xiv+366},
      ISBN = {978-88-7642-442-7; 978-88-7642-443-4},
   MRCLASS = {35-02 (35B65 35J20 35J60 58E20)},
  MRNUMBER = {3099262},
       DOI = {10.1007/978-88-7642-443-4},
       URL = {https://doi.org/10.1007/978-88-7642-443-4},
}

@book {GT01,
    AUTHOR = {Gilbarg, David and Trudinger, Neil S.},
     TITLE = {Elliptic partial differential equations of second order},
    SERIES = {Classics in Mathematics},
      NOTE = {Reprint of the 1998 edition},
 PUBLISHER = {Springer-Verlag, Berlin},
      YEAR = {2001},
     PAGES = {xiv+517},
      ISBN = {3-540-41160-7},
   MRCLASS = {35-02 (35Jxx)},
  MRNUMBER = {1814364},
}

@misc{Dim25,
      title={Minimal submanifolds with multiple isolated singularities}, 
      author={Bryan Dimler},
      year={2025},
      eprint={2409.20327},
      archivePrefix={arXiv},
      primaryClass={math.DG},
      url={https://arxiv.org/abs/2409.20327}, 
     note={arXiv preprint: 2409.20327}
}

@article {LO1,
    AUTHOR = {Lawson, Jr., H. B. and Osserman, R.},
     TITLE = {Non-existence, non-uniqueness and irregularity of solutions to
              the minimal surface system},
   JOURNAL = {Acta Math.},
  FJOURNAL = {Acta Mathematica},
    VOLUME = {139},
      YEAR = {1977},
    NUMBER = {1-2},
     PAGES = {1--17},
      ISSN = {0001-5962,1871-2509},
   MRCLASS = {35J60 (53A10 58E12)},
  MRNUMBER = {452745},
       DOI = {10.1007/BF02392232},
       URL = {https://doi.org/10.1007/BF02392232},
}

@article {Wh16,
    AUTHOR = {White, Brian},
     TITLE = {Controlling area blow-up in minimal or bounded mean curvature
              varieties},
   JOURNAL = {J. Differential Geom.},
  FJOURNAL = {Journal of Differential Geometry},
    VOLUME = {102},
      YEAR = {2016},
    NUMBER = {3},
     PAGES = {501--535},
      ISSN = {0022-040X,1945-743X},
   MRCLASS = {53C42},
  MRNUMBER = {3466806},
MRREVIEWER = {Yijun\ He},
       URL = {http://projecteuclid.org/euclid.jdg/1456754017},
}

@incollection {Si84,
    AUTHOR = {Simon, Leon},
     TITLE = {Isolated singularities of extrema of geometric variational
              problems},
 BOOKTITLE = {Harmonic mappings and minimal immersions ({M}ontecatini,
              1984)},
    SERIES = {Lecture Notes in Math.},
    VOLUME = {1161},
     PAGES = {206--277},
 PUBLISHER = {Springer, Berlin},
      YEAR = {1985},
      ISBN = {3-540-16040-X},
   MRCLASS = {58E15 (58E20)},
  MRNUMBER = {821971},
MRREVIEWER = {Harold\ Parks},
       DOI = {10.1007/BFb0075139},
       URL = {https://doi.org/10.1007/BFb0075139},
}

@article {B79,
    AUTHOR = {Barbosa, Jo{\~a}o Lucas Marqu\^es},
     TITLE = {An extrinsic rigidity theorem for minimal immersions from
              {$S\sp{2}$}\ into {$S\sp{n}$}},
   JOURNAL = {J. Differential Geometry},
  FJOURNAL = {Journal of Differential Geometry},
    VOLUME = {14},
      YEAR = {1979},
    NUMBER = {3},
     PAGES = {355--368},
      ISSN = {0022-040X,1945-743X},
   MRCLASS = {53C42 (53C45)},
  MRNUMBER = {594706},
MRREVIEWER = {V.\ T.\ Fomenko},
       URL = {http://projecteuclid.org/euclid.jdg/1214435101},
}

@article {NT23,
    AUTHOR = {Nascimento, Thialita M. and Teixeira, Eduardo V.},
     TITLE = {New regularity estimates for fully nonlinear elliptic
              equations},
   JOURNAL = {J. Math. Pures Appl. (9)},
  FJOURNAL = {Journal de Math\'ematiques Pures et Appliqu\'ees. Neuvi\`eme
              S\'erie},
    VOLUME = {171},
      YEAR = {2023},
     PAGES = {1--25},
      ISSN = {0021-7824,1776-3371},
   MRCLASS = {35B65 (35D40 35J60)},
  MRNUMBER = {4545167},
MRREVIEWER = {Liangwei\ Wang},
       DOI = {10.1016/j.matpur.2022.12.008},
       URL = {https://doi.org/10.1016/j.matpur.2022.12.008},
}

@misc{DG26,
      title={Special Lagrangians with multiple isolated singularities}, 
      author={Bryan Dimler and Filippo Gaia},
      year={2026},
      eprint={2607.26302},
      archivePrefix={arXiv},
      primaryClass={math.DG},
      url={https://arxiv.org/abs/2607.26302},
      note= {arXiv preprint: 2607.26302},
}
\end{document}